\newtheorem{thm}{Theorem}
\newtheorem{cor}[thm]{Corollary}
\newtheorem{defn}{Definition}
\newtheorem{exa}{Example}
\newtheorem{rem}{Remark}
\author{Camilo Sanabria Malag\'on\thanks{Partially supported by Vicerrector\'ia de
Investigaciones de la Universidad de los Andes grant PEP P13.160422.030 FAPA – Camilo Sanabria.}\\Department of Mathematics\\ Universidad de los Andes, Bogot\'a, Colombia}
\title{Schwarz maps of Algebraic Linear Ordinary Differential Equations}
\begin{document}

\date{}

\maketitle

\begin{abstract}
A linear ordinary differential equation is called algebraic if all its solution are algebraic over its field of definition. In this paper we solve the problem of finding closed form solution to algebraic linear ordinary differential equations in terms of standard equations. Furthermore, we obtain a method to compute all algebraic linear ordinary differential equations with rational coefficients by studying their associated Schwarz map through the Picard-Vessiot Theory.
\end{abstract}

\section*{Introduction}
 
Since the introduction of the concept of local systems by B. Riemann \cite{RIEMANN1953,RIEMANN1968}, the study of linear ordinary differential equations with rational coefficients has been dominated by his viewpoint. Among the most prominent works based on this perspective are P. Deligne's work on the Riemann-Hilbert correpondence and Hilbert's twenty-first problem \cite{DELIGNE1970}, M. Kashiwara's work on the Riemann-Hilbert problem for holonomic systems \cite{KASHIWARA1984}, and B.Malgrange's exposition on the Riemann-Hilbert-Birkhoff correspondence \cite{MALGRANGE1991}. Although this approach of Riemann is mainly used in the study of equations with regular singularities, i.e. fuchsian systems, introduced by L. Fuchs in \cite{FUCHS1909}, it also gave rise to different tools for studying the local behavior around irregular singularities. A modern perspective of this theory was introduced by N. M. Katz through his concept of rigid local systems \cite{KATZ1996}.

Another perspective is given by Picard-Vessiot theory, developed by E. Picard \cite{PICARD1991III}, E. Vessiot \cite{VESSIOT1892} and E. R. Kolchin \cite{KOLCHIN1946}. With this theory, a differential Galois group is associated to a linear ordinary differential equation. It was proved by J.-P. Ramis \cite{RAMIS1985,RAMIS1990} and J. J. Morales \cite{MORALES1999,MORALES2001} that this group is obtained by combining the monodromy, exponential tori and Stoke's matrices of the local system in a linear algebraic group . The galoisian approach has proved successful at obtaining closed form solutions to the equation. Indeed E. R. Kolchin proved in \cite{KOLCHIN1946} that the solutions are liouvillian if and only if the differential Galois group is solvable, and for the second order case this result has been systematized by J. Kovacic's algorithm \cite{KOVACIC1986}. M. van Hoeij, J. F. Ragot, M.Singer, F. Ulmer and J.-A. Weil have extended this algorithm to higher order equations \cite{VANHOEIJ1999,SINGER1993II,SINGER1993,ULMER2003}. Using those algorithms the solutions are presented as the exponential of the antiderivative of a function which is algebraic over the field of definition of the equation. This algebraic function is described in terms of its minimal polynomial, which is computed using the (semi-)invariants of the differential Galois group. Therefore, the smaller the group the more computations are involved, reaching a maximum when the differential Galois group is finite. This is equivalent to the case when all the solutions to the linear ordinary differential equation are algebraic over its field of definition. These equations are called algebraic linear ordinary differential equations. 

A classical result of F. Klein \cite{KLEIN187711,KLEIN187712} states that a full system of solutions to an algebraic second order linear ordinary differential equation with rational coefficients can be given in closed form as solutions to hypergeometric equations precomposed with a rational function, multiplied by the solution to a first order linear differential equation. The first modern treatment of Klein's result was done by B. Dwork and F. Baldassari in \cite{BALDASSARRI1980,BALDASSARRI1979}. To overcome the difficulty of the extensive computations required in the algebraic case in Kovacic's algorithm, M. Berkenbosch applied in \cite{BERKENBOSCH2006} this result of Klein and he generalized it to order three by introducing the concept of standard equations. Furthermore, C. Sanabria in \cite{SANABRIA2014} extended it to arbitrary order. Therefore, the problem of finding closed form solutions to algebraic linear ordinary differential equations has been reduced to the problem of first, characterizing the family of standard equations, and second, identifying which standard equation is needed in order to obtain closed form solutions to a given equation. Both of these problems have been solved in this paper. Furthermore, we obtain a method to compute explicitly all irreducible algebraic linear ordinary differential equations.

The main tool used in the extensions of Klein's theorem in \cite{BERKENBOSCH2006,SANABRIA2014} are Schwarz maps. Schwarz maps offer a third approach to the study of linear ordinary differential equations which was used by H. A. Schwarz \cite{SCHWARZ1873}, F. Klein \cite{KLEIN1878,KLEIN1879} and G. Fano \cite{FANO1900}. Their most famous application is the description of tessellations of the Riemann sphere by Schwarz triangles \cite[Sections III.6,III.7,III.8]{YOSHIDA1997}. With Schwarz approach, instead of studying local systems, a full system of solutions of a linear differential equation is understood as a parametrization of a complex curve in a complex projective space.

In this paper, we study Schwarz maps from the perspective of the Picard-Vessiot theory using Compoint's theorem \cite{COMPOINT1998}. In the first section, we construct a method to identify the standard equation needed to obtain a closed form solutions to a given algebraic linear ordinary differential equation. We also characterize standard equations by showing that, up to projective equivalence, their associated Schwarz map corresponds to an orbit of a differentiable dynamical system (Theorem \ref{thmI}). On the other hand, in the second section, we show a family of differential dynamical system whose orbits are Schwarz maps of standard equations (Theorem \ref{thmII}). Throughout the paper we use P\'epin's equation \cite{PEPIN1881} and Hurwitz's equation \cite{HURWITZ1886} to illustrate the theory. Moreover, in the third section, we apply this approach to the classical Hesse pencil \cite{HESSE1844I,HESSE1844II}, which is a one-parameter continuous family of elliptic curves that has gain much attention recently in different areas \cite{ARTEBANI2009,SMART2001,ZASLOW2005}. We obtain a one-parameter family of standard equations whose associated Schwarz maps parametrize this family of elliptic curves. We also obtain an equation that explains how these parametrizations deform as we vary the parameter of the Hesse pencil. Additionally, in the fourth section, as another application of our approach, we study the family of standard equations whose associated Schwarz maps parametrize the curves in the Fricke degree $12$ pencil \cite{FRICKE1893,KATO2004}. We also show that in this continuous family of equations all the generic cases are deformations of each other, and the singular cases correspond to singularities of an equation in terms of the deformation parameter. In particular, all the curves in the Fricke degree $12$ pencil are actually orbits of a common differentiable dynamical system.

All the examples in this paper were carried out using MAPLE and the codes can be obtained from my webpage.

\section{Schwarz maps and Picard-Vessiot Theory for Linear Ordinary Differential Equations}

Consider an irreducible linear ordinary differential equation $L(x)=0$, where
\begin{equation*}\label{LODE}
L(x)=\left(\frac{d}{dt}\right)^n x+a_{n-1}\left(\frac{d}{dt}\right)^{n-1}x+\ldots+a_1\frac{d}{dt}x+a_0x,
\end{equation*}
$x=x(t)$ and $a_{n-1},\ldots,a_1,a_0\in\mathbb{C}(t)$.

A Picard-Vessiot extension for $L(x)=0$ is the field
$$K=\mathbb{C}(t)\left(\left(\frac{d}{dt}\right)^{i-1} x_j \Big|\ i,j=1,\ldots,n\right),$$
where $x_j=x_j(t)$, for $j=1,\ldots,n$ form a full system of solutions \cite[Proposition 1.22]{VANDERPUT2003}.

Let $J$ be the kernel of the $\mathbb{C}(t)$-morphism
\begin{eqnarray*}
\Psi: \mathbb{C}(t)\Big[X_{ij}|\ i,j=1,\ldots,n\Big]\left[\frac{1}{\det(X_{ij})}\right] & \longrightarrow & K\\
X_{ij} & \longmapsto & \frac{d^{i-1}}{dt^{i-1}}x_j.
\end{eqnarray*}
An isomorphic representation of the differential Galois group of $L(x)=0$ is the algebraic group $G\subset GL_n(\mathbb{C})$ composed by the elements $(g_{ij})_{i,j=1}^n$ sending the ideal $J$ into itself under the $\mathbb{C}(t)$-automorphisms \cite[Observations 1.26]{VANDERPUT2003}
\begin{eqnarray*}
\mathbb{C}(t)\Big[X_{ij}|\ i,j=1,\ldots,n\Big]\left[\frac{1}{\det(X_{ij})}\right]  & \longrightarrow & \mathbb{C}(t)\Big[X_{ij}|\ i,j=1,\ldots,n\Big]\left[\frac{1}{\det(X_{ij})}\right] \\
X_{ij} & \longmapsto & \sum_{l=1}^nX_{il}g_{lj}.
\end{eqnarray*}
By the Galois correspondence \cite[Proposition 1.34]{VANDERPUT2003} if $P\in\mathbb{C}(t)[X_{ij}|\ i,j=1,\ldots,n]$ is $G$-invariant, then $\Psi(P)\in \mathbb{C}(t)$. Since $L(x)=0$ is irreducible, hence $G$ is reductive \cite[Exercise 2.38]{VANDERPUT2003}. 

Under these assumptions, we have:
\begin{thm}[Compoint's theorem \cite{BEUKERS2000}]\label{Compointthm} If the differential Galois group of $L(x)=0$ is reductive, the ideal $J$ is generated by the $G$-invariants contained in it.
\end{thm}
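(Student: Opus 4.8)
\emph{Proof strategy.}\quad Write $R=\mathbb{C}(t)[X_{ij}\mid i,j=1,\dots,n][1/\det(X_{ij})]$, the coordinate ring of $GL_{n}$ over $\mathbb{C}(t)$, carrying the left $G$-action $\sigma_{g}\colon X_{ij}\mapsto\sum_{l}X_{il}g_{lj}$, i.e.\ right translation on $GL_{n}$. By construction $J$ is $G$-stable, the map $\Psi$ is $G$-equivariant for the Galois action of $G$ on its image $A:=\Psi(R)=R/J$, which is the Picard--Vessiot ring $\mathbb{C}(t)[x_{j}^{(i-1)},1/\operatorname{Wr}]$ (with $\operatorname{Wr}$ the Wronskian). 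Since $R\cdot(J\cap R^{G})\subseteq J$ is trivial, the content of the theorem is the reverse inclusion. The plan is to exhibit $J$ as the ideal of a single scheme-theoretic fibre of the quotient morphism $q\colon GL_{n}\to GL_{n}/G$, whose defining equations are $G$-invariant by construction.

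Because $G$ is reductive, the geometric quotient $GL_{n}/G$ is an affine variety with coordinate ring $R^{G}$ (Matsushima's criterion), and $q\colon GL_{n}\to GL_{n}/G$ is a faithfully flat $G$-torsor — a standard fact for quotients of an algebraic group by a closed subgroup, the action by right translations being free with orbits equal to the fibres of $q$. The remark preceding the statement gives $\Psi(R^{G})\subseteq\mathbb{C}(t)$, so the composite $R^{G}\hookrightarrow R\xrightarrow{\Psi}A$ factors through a $\mathbb{C}(t)$-algebra homomorphism $R^{G}\to\mathbb{C}(t)$, that is, through evaluation at a point $p\in(GL_{n}/G)(\mathbb{C}(t))$. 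Put $\mathfrak{m}_{p}=\ker(R^{G}\to\mathbb{C}(t))$. Then $\mathfrak{m}_{p}\subseteq J\cap R^{G}$, hence $R\,\mathfrak{m}_{p}\subseteq R\cdot(J\cap R^{G})\subseteq J$, and everything reduces to showing that the surjection $R/R\,\mathfrak{m}_{p}\twoheadrightarrow A$ is injective.

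Now $R/R\,\mathfrak{m}_{p}=R\otimes_{R^{G}}\mathbb{C}(t)$ is the fibre $q^{-1}(p)$, hence the coordinate ring of a $G$-torsor over $\operatorname{Spec}(\mathbb{C}(t))$; on the other hand, by the classical structure theory of Picard--Vessiot rings \cite{VANDERPUT2003}, $\operatorname{Spec}(A)$ is itself a $G$-torsor over $\operatorname{Spec}(\mathbb{C}(t))$ (equivalently, $A\otimes_{\mathbb{C}(t)}A\cong A\otimes_{\mathbb{C}}\mathbb{C}[G]$). The $G$-equivariant surjection $R/R\,\mathfrak{m}_{p}\twoheadrightarrow A$ induces a $G$-equivariant morphism $\operatorname{Spec}(A)\to\operatorname{Spec}(R/R\,\mathfrak{m}_{p})$ of $G$-torsors over $\operatorname{Spec}(\mathbb{C}(t))$ — concretely, the factorization through $q^{-1}(p)$ of the $G$-equivariant closed immersion $\operatorname{Spec}(A)\hookrightarrow GL_{n}$. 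Any $G$-equivariant morphism of $G$-torsors over a common base is an isomorphism (after an fppf base change trivializing both torsors it becomes translation by a morphism to $G$), so the surjection is an isomorphism and $J=R\,\mathfrak{m}_{p}=R\cdot(J\cap R^{G})$, which is exactly the assertion of the theorem.

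The main obstacle — indeed the only non-formal ingredient — is the claim that $\operatorname{Spec}(A)$ is a $G$-torsor over $\mathbb{C}(t)$ itself, rather than over some algebraic extension; this is precisely where one uses that the constant field $\mathbb{C}$ is algebraically closed, so that $A$ is a domain with $\operatorname{Frac}(A)=K$ and $A^{G}=K^{G}=\mathbb{C}(t)$. A secondary point, harmless in characteristic $0$ but deserving a careful statement, is that $\operatorname{Spec}(R^{G})$ genuinely coincides with the geometric quotient $GL_{n}/G$ and that $q$ is genuinely a torsor. One could instead follow Compoint's original, more computational route, replacing this geometric step by explicit manipulations with the Reynolds operator of the reductive group $G$; but even there one cannot avoid the torsor structure of $\operatorname{Spec}(A)$, since it is what rules out $G$-stable ideals of $R/R\,\mathfrak{m}_{p}$ having no nonzero invariants.
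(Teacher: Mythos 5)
The paper does not actually prove this statement: it is imported verbatim from the literature, with the proof delegated to the cited reference of Beukers, so there is no internal argument to compare yours against. Judged on its own, your proposal is correct as a proof sketch, and it is in the same spirit as the known torsor-theoretic proofs. The reduction you perform is exactly the right one: since $J\cap R^{G}=\mathfrak{m}_{p}$ is the kernel of $\Psi|_{R^{G}}\colon R^{G}\to\mathbb{C}(t)$, the theorem is equivalent to the scheme-theoretic fibre $\operatorname{Spec}(R/R\,\mathfrak{m}_{p})$ of $q$ over $p$ coinciding with $\operatorname{Spec}(A)$, and your observation that a $G$-equivariant surjection between two $G$-torsors over $\operatorname{Spec}(\mathbb{C}(t))$ is forced to be an isomorphism closes the argument. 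The two nontrivial inputs are correctly identified and correctly deployed: that $\operatorname{Spec}(A)$ is a $G$-torsor over $\mathbb{C}(t)$ (the structure theorem for Picard--Vessiot rings over an algebraically closed field of constants, \cite{VANDERPUT2003}), and that for reductive $G$ the homogeneous space $GL_{n}/G$ is affine with coordinate ring $R^{G}$ and $q$ is a faithfully flat $G$-torsor (Matsushima together with the standard torsor property of quotients by closed subgroups, or Luna's slice theorem applied to a free action with closed orbits); reductivity enters only here, which is as it should be. Two small points deserve one extra line each in a written-up version: first, $\Psi(R^{G})\subseteq\mathbb{C}(t)$ must be justified for the localized ring $R^{G}$, not just for polynomial invariants as in the remark you quote --- this follows from $K^{G}=\mathbb{C}(t)$ and the $G$-equivariance of $\Psi$; second, the equivariance of the surjection $R/R\,\mathfrak{m}_{p}\twoheadrightarrow A$ should be phrased for the coaction (morphism of $G$-schemes), not merely for $\mathbb{C}$-points of $G$, though in characteristic zero the two are equivalent by Zariski density.
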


\subsection{Picard-Vessiot Theory in the algebraic case}
From now on we will assume that all the solutions to $L(x)=0$ are algebraic over $\mathbb{C}(t)$. In particular $G$ is finite and the Picard-Vessiot extension is
$$K=\mathbb{C}(t)\left[x_1,\ldots,x_n\right].$$

Let $I\subset\mathbb{C}(t)[X_1,\ldots,X_n]$ be the kernel of the $\mathbb{C}(t)$-morphism
\begin{eqnarray*}
\Phi: \mathbb{C}(t)[X_1,\ldots,X_n] & \longrightarrow & K\\
X_j & \longmapsto & x_j
\end{eqnarray*}
As above, the Galois correspondence implies that if $P\in\mathbb{C}(t)[X_1,\ldots,X_n]$ is $G$-invariant, then $\Phi(P)\in \mathbb{C}(t)$. We have

\begin{thm}[Compoint's theorem for the algebraic case] Let $L(x)=0$ be an algebraic irreducible linear ordinary differential equation. If the differential Galois group of $L(x)=0$ is finite, the ideal $I$ is generated by the $G$-invariants contained in it.
\end{thm}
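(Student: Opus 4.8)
The plan is to deduce the algebraic-case statement from the already-cited Compoint theorem (Theorem \ref{Compointthm}) applied to the fundamental-matrix ideal $J$. First I would observe that when $G$ is finite, the Picard-Vessiot extension $K=\mathbb{C}(t)[x_1,\dots,x_n]$ is a finite algebraic extension of $\mathbb{C}(t)$; in particular each derivative $d^{i-1}x_j/dt^{i-1}$ already lies in $\mathbb{C}(t)[x_1,\dots,x_n]$, since the $x_j$ satisfy $L(x)=0$ and we may reduce higher derivatives using the equation and the fact that $K$ is a field finite over $\mathbb{C}(t)$. So there is a natural surjection $\mathbb{C}(t)[X_1,\dots,X_n]\to K$ whose kernel is $I$, and the larger ring $\mathbb{C}(t)[X_{ij}][\det(X_{ij})^{-1}]$ surjects onto $K$ with kernel $J$; both $G$ and the differential Galois group act compatibly on these presentations.

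\textbf{Comparing the two ideals.} The key is to relate $I$ and $J$ explicitly. I would define a $\mathbb{C}(t)$-algebra homomorphism $\pi$ from $\mathbb{C}(t)[X_{ij}][\det(X_{ij})^{-1}]$ onto a localization of $\mathbb{C}(t)[X_1,\dots,X_n]$ by sending the first column $X_{i1}$ to (expressions in) $X_1$, using that each $d^{i-1}x_1/dt^{i-1}$ is a fixed $\mathbb{C}(t)$-polynomial in $x_1$, and similarly for the other columns; concretely, there are polynomials $p_i\in\mathbb{C}(t)[X]$ with $d^{i-1}x_j/dt^{i-1}=p_i(x_j)$ for all $j$, coming from the minimal polynomial relation, so $X_{ij}\mapsto p_i(X_j)$ is the right map. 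One checks this is $G$-equivariant (the $G$-action on columns $X_{\bullet j}\mapsto\sum_l X_{\bullet l}g_{lj}$ corresponds to the action on $X_j$, because the $x_j$ form a basis on which $G$ acts linearly and the $p_i$ are $\mathbb{C}(t)$-polynomial so they intertwine the two linear actions up to the identification). Then $\Phi\circ\pi=\Psi$ on the image, so $\pi^{-1}(I)=J$ after inverting $\det(X_{ij})$, i.e. $I$ is (a contraction/expansion of) the image of $J$ under this identification.

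\textbf{Transporting Compoint's theorem.} Now apply Theorem \ref{Compointthm}: $J$ is generated by its $G$-invariant elements. Pushing these generators through $\pi$, and using that $\pi$ is $G$-equivariant so that $\pi$ carries $G$-invariants of the source to $G$-invariants of the target, I obtain that $I$ (in the localized polynomial ring) is generated by $G$-invariant polynomials. The last step is to descend from the localization back to $\mathbb{C}(t)[X_1,\dots,X_n]$ itself: since $G$ is finite the invariant ring $\mathbb{C}(t)[X_1,\dots,X_n]^G$ is itself finitely generated and, being a homogeneous situation (the $x_j$ span a $G$-stable space), one can clear denominators — any invariant relation can be multiplied by a suitable invariant to land back in the polynomial ring without leaving the ideal generated by invariants. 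Finally, one re-checks irreducibility is used only to invoke reductivity of $G$ and hence Theorem \ref{Compointthm}.

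\textbf{Main obstacle.} The routine parts are the derivative-substitution polynomials $p_i$ and the finiteness of $K/\mathbb{C}(t)$. The genuinely delicate step is verifying that the map $X_{ij}\mapsto p_i(X_j)$ is \emph{$G$-equivariant} with respect to the two different $G$-actions (linear substitution on the $X_j$ versus the matrix action on the $X_{ij}$) and that under it $G$-invariants of the big ring restrict onto $G$-invariants of the small ring — in other words, showing no $G$-invariants of $I$ are "lost" in passing from the fundamental-matrix picture to the single-solution picture. Handling the localization at $\det(X_{ij})$ carefully (so that clearing it does not destroy invariance or enlarge the ideal) is where I expect to spend the most effort.
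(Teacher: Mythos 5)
There is a genuine gap, and it sits exactly where you yourself flagged the difficulty: the map $\pi$. You posit single\-/variable polynomials $p_i\in\mathbb{C}(t)[X]$ with $\tfrac{d^{i-1}}{dt^{i-1}}x_j=p_i(x_j)$, the \emph{same} $p_i$ for every $j$, and then need $X_{ij}\mapsto p_i(X_j)$ to intertwine the substitution action $X_j\mapsto\sum_l X_lg_{lj}$ with the matrix action $X_{ij}\mapsto\sum_l X_{il}g_{lj}$. Equivariance is the polynomial identity $p_i\bigl(\sum_l X_l g_{lj}\bigr)=\sum_l p_i(X_l)\,g_{lj}$ for every $g\in G$ and every $j$. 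The right-hand side contains no mixed monomials, while the left-hand side does as soon as $p_i$ is nonlinear and the column $(g_{lj})_l$ has at least two nonzero entries; so unless $G$ happens to consist entirely of monomial matrices, equivariance forces each $p_i$ to be linear, $p_i(X)=c_iX$ with $c_i\in\mathbb{C}(t)$. That would give $x_j^{(i-1)}=c_ix_j$, i.e.\ $L$ reducible of order $1$, contradicting irreducibility for $n\geq 2$. Even setting equivariance aside, the existence of a $j$-independent $p_i$ is unjustified: what is true is only that each $x_j^{(i-1)}$ lies in $\mathbb{C}(t)[x_1,\dots,x_n]$ (or in $\mathbb{C}(t)[x_j]$ with a polynomial depending on the minimal polynomial of $x_j$, hence on $j$). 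Repairing this would require an equivariant multivariable lift of the tuple $(x_1^{(i-1)},\dots,x_n^{(i-1)})$ via a Reynolds-operator argument you do not give, and you would still face the saturation problem at the end: clearing the (only semi-invariant) denominator $\pi(\det(X_{ij}))$ shows $W^mP$, not $P$, lies in the ideal generated by invariants.

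The paper's proof goes in the opposite direction and avoids every one of these issues. Instead of a surjection from the big ring onto the small one, it uses the inclusion of the small ring as the first row of variables, $X_j:=X_{1j}$. That subring $\mathbb{C}(t)[X_{11},\dots,X_{1n}]$ is manifestly $G$-stable (the matrix action mixes columns within each fixed row), the restriction of $\Psi$ to it is $\Phi$, so $I$ is literally the contraction $J\cap\mathbb{C}(t)[X_{11},\dots,X_{1n}]$, and one then argues that the contraction of an ideal generated by its $G$-invariants to a $G$-stable subring is again generated by the $G$-invariants it contains. No derivative-substitution polynomials, no equivariance of a nonlinear substitution, and no localization to undo. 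If you want to salvage your write-up, replace $\pi$ by this inclusion and transport Compoint's theorem through the contraction rather than through a quotient; the only point then requiring care is the contraction step itself.
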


\begin{proof} From Theorem \ref{Compointthm}  we have that $J$ is generated by the $G$-invariants it contains. Since $\mathbb{C}(t)[X_{11},\ldots,X_{1n}]$ is closed under the action of $G$, the contraction of $J$ to $\mathbb{C}(t)[X_{11},\ldots,X_{1n}]$ is also  generated by the $G$-invariants it contains. The claim follows now by noting that $I$ corresponds to $J\cap\mathbb{C}(t)[X_{11},\ldots,X_{1n}]$ where $X_{1j}:=X_j$, for $j=1,\ldots,n$.
\end{proof}

\begin{cor}
If $P_1,\ldots,P_N\in\mathbb{C}[X_1,\ldots,X_n]$ is a set of generators of the $G$-invariant subring $\mathbb{C}[X_1,\ldots,X_n]^G$, then
\begin{equation}\label{Igen}
I=\langle P_1-f_1,\ldots, P_N-f_N\rangle,
\end{equation}
where $f_i=\Phi(P_i)$, $i=1,\ldots,N$.
\end{cor}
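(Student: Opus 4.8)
The plan is to deduce the Corollary directly from Compoint's theorem for the algebraic case together with a standard fact from invariant theory of finite groups. First I would recall that since $G$ is finite, the invariant ring $\mathbb{C}[X_1,\ldots,X_n]^G$ is a finitely generated $\mathbb{C}$-algebra (Hilbert--Noether), so a finite generating set $P_1,\ldots,P_N$ as in the statement exists; the ring $\mathbb{C}(t)[X_1,\ldots,X_n]^G$ of $G$-invariants over the coefficient field is then $\mathbb{C}(t)\otimes_{\mathbb{C}}\mathbb{C}[X_1,\ldots,X_n]^G$, hence also generated as a $\mathbb{C}(t)$-algebra by $P_1,\ldots,P_N$. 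Here one uses that $G$ acts $\mathbb{C}(t)$-linearly on the $X_j$ and trivially on $\mathbb{C}(t)$, so taking invariants commutes with the base change from $\mathbb{C}$ to $\mathbb{C}(t)$.

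Next I would show the inclusion $\langle P_1-f_1,\ldots,P_N-f_N\rangle \subseteq I$. This is immediate: by the Galois correspondence each $G$-invariant $P_i$ satisfies $\Phi(P_i)=f_i\in\mathbb{C}(t)$, so $\Phi(P_i-f_i)=f_i-f_i=0$, i.e. $P_i-f_i\in I$.

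For the reverse inclusion $I\subseteq\langle P_1-f_1,\ldots,P_N-f_N\rangle$, I would invoke Compoint's theorem for the algebraic case: $I$ is generated by the set $I^G$ of $G$-invariant polynomials it contains. So it suffices to show every $Q\in I^G$ lies in $\langle P_1-f_1,\ldots,P_N-f_N\rangle$. Write $Q = R(P_1,\ldots,P_N)$ for some polynomial $R$ with coefficients in $\mathbb{C}(t)$, using the generation statement from the first paragraph. Applying $\Phi$ and using that $\Phi$ is a $\mathbb{C}(t)$-algebra morphism with $\Phi(P_i)=f_i$, we get $0=\Phi(Q)=R(f_1,\ldots,f_N)$, where $R(f_1,\ldots,f_N)\in\mathbb{C}(t)$ is the constant obtained by substituting. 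Therefore
\begin{equation*}
Q = R(P_1,\ldots,P_N) - R(f_1,\ldots,f_N),
\end{equation*}
and the elementary algebraic identity that $R(P_\bullet)-R(f_\bullet)$ lies in the ideal generated by the differences $P_i-f_i$ (telescoping one variable at a time, or Taylor expansion of $R$ about $(f_1,\ldots,f_N)$) finishes the argument.

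The only subtle point — and the step I expect to need the most care — is the base-change claim that $\mathbb{C}(t)[X_1,\ldots,X_n]^G$ is generated over $\mathbb{C}(t)$ by a $\mathbb{C}$-generating set of $\mathbb{C}[X_1,\ldots,X_n]^G$; everything else is formal. This follows because $G$ acts on the finite-dimensional $\mathbb{C}$-vector space spanned by monomials of each fixed degree, taking $G$-invariants of a representation is exact and commutes with the flat (indeed faithfully flat) extension $\mathbb{C}\hookrightarrow\mathbb{C}(t)$, and the grading lets us reassemble these degreewise statements. Once this is in hand the Corollary is just a clean repackaging of Compoint's theorem into explicit generators.
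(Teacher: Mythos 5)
Your proof is correct and is exactly the standard deduction the paper leaves implicit (the Corollary is stated without proof as an immediate consequence of Compoint's theorem for the algebraic case): the containment $\langle P_i-f_i\rangle\subseteq I$ is trivial, and the reverse follows from the theorem plus the flat base-change identification $\mathbb{C}(t)[X_1,\ldots,X_n]^G=\mathbb{C}(t)\otimes_{\mathbb{C}}\mathbb{C}[X_1,\ldots,X_n]^G$ and the telescoping identity $R(P_\bullet)-R(f_\bullet)\in\langle P_i-f_i\rangle$. All steps check out, including the point you flag as subtle, since $G\subset GL_n(\mathbb{C})$ acts $\mathbb{C}(t)$-linearly with complex matrix entries and taking invariants of a finite group commutes with the flat extension $\mathbb{C}\hookrightarrow\mathbb{C}(t)$.
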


\subsection{A first order differential equation}\label{1stDE}

In this sections, starting from the solutions $x_1,\ldots,x_n$ to $L(x)=0$, we will construct a non-autonomous first order differential equation with solution $$\mathbf{X}(t)=(x_1,\ldots,x_n).$$

Differentiating the generators of $I$ in (\ref{Igen}) we obtain the relations
\begin{equation}\label{difgen}
\sum_{j=1}^n\frac{\partial P_i}{\partial X_j}(x_1,\ldots,x_n)\frac{d}{dt}x_j=\frac{d}{dt}f_i,\quad i=1,\ldots,N.
\end{equation}
Since $G$ is finite, the orbit space $\mathbb{C}^n/G$ has dimension $n$, therefore the derivative of the map of projection into the orbits
\begin{eqnarray*}
\Pi_G: \mathbb{C}^n & \longrightarrow & \mathbb{C}^n/G\subseteq\mathbb{C}^N\\
\mathbf{X}=(X_1,\ldots,X_n) & \longmapsto & \left(P_1(\mathbf{X}),\ldots,P_N(\mathbf{X})\right)
\end{eqnarray*}
is non-singular in a dense subset $U\subseteq\mathbb{C}^n$. In particular, after arranging the indices of the $P_i$'s if necessary, we may assume that
$$M(\mathbf{X})=\left[\begin{array}{ccc}
\partial P_1/\partial X_1 & \ldots & \partial P_1/\partial X_n\\
\vdots & \ddots & \vdots\\
\partial P_n/\partial X_1 & \ldots & \partial P_n/\partial X_n
\end{array}\right](\mathbf{X})$$
is invertible over $U$.

Using $n$ first equations of the system (\ref{difgen}) we define a $G$-invariant first order differential equation over $U$
\begin{equation}\label{difeq}
\frac{d}{dt}\mathbf{X}=F(t,\mathbf{X})
\end{equation}
where
\begin{equation}\label{Fdifeq}
F(t,\mathbf{X})=\big[M(\mathbf{X})\big]^{-1}\left[\begin{array}{c} \frac{d}{dt}f_1\\ \vdots \\ \frac{d}{dt}f_n\end{array}\right].
\end{equation}

We illustrate this construction in the following example.

\begin{exa}\label{PepinI}
Let us consider P\'epin's equation
$$\left(\frac{d}{dt}\right)^2x+\frac{21}{100}\frac{t^2-t+1}{t^2(t-1)^2}x=0$$
with differential Galois group $A_5^{SL_2}$ \cite{SINGER1993II}. Using the algorithm by M. van Hoeij and J.-A. Weil in \cite{VANHOEIJ1997} we compute the generators of the invariants in $\mathbb{C}[X_1,X_2]$. Starting from the series expansion of two linearly independent solutions around $t=0$,  we obtain
\begin{eqnarray*}
P_1(X_1,X_2) & = & X_1^{11}X_2-\frac{11}{256}X_1^6X_2^6-\frac{1}{65536}X_1X_2^{11} \\
P_2(X_1,X_2) & = & \frac{144027}{2097152}X_1^{20}+\frac{57}{64}X_1^{15}X_2^5+\frac{247}{32768}X_1^{10}X_2^{10}-\frac{57}{4194304}X_1^5X_2^{15}\\
& & \quad +\frac{1}{4294967296}X_2^{20}
\end{eqnarray*}
with their respective evaluations
\begin{eqnarray*}
P_1(x_1,x_2) & = & t^4(t-1)^4\\
P_2(x_1,x_2) & = & t^6(t-1)^6(t^2-t+1).
\end{eqnarray*}
So $\mathbf{X}(t)=(x_1,x_2)$ is a solution to the system of first order linear differential equations
{\small
\begin{eqnarray*}
\frac{d}{dt}X_1 & = & -65536X_2^4(956301312X_1^{15}+16187392X_1^{10}X_2^5-43776X_1^5X_2^{10}+X_2^{15})\\
 & &\quad (4t^3(t-1)^3(2t-1))\frac{1}{\Delta(X_1,X_2)}\\
 & &\ +\frac{1073741824}{5}X_1(65536X_1^{10}-16896X_1^5X_2^5-11X_2^{10})\\
 & &\quad (t^5(t-1)^5(2t-1)(7t^2-7t+6))\frac{1}{\Delta(X_1,X_2)}\\
\frac{d}{dt}X_2 & = & 16777216X_1^4(1152216X_1^{15}+11206656X_1^{10}X_2^5+63232X_1^5X_2^{10}-57X_2^{15})\\
  & &\quad (4t^3(t-1)^3(2t-1))\frac{1}{\Delta(X_1,X_2)}\\
  & &\ -\frac{1073741824}{5}X_2(720896X_1^{10}-16896X_1^5X_2^5-X_2^{10})\\
  & &\quad (t^5(t-1)^5(2t-1)(7t^2-7t+6))\frac{1}{\Delta(X_1,X_2)}
\end{eqnarray*} 
}
where
{\small
\begin{eqnarray*}
\Delta(X_1,X_2)& = & 19330976710656X_1^{30}\\
   & &\ -506361069699072X_1^{25}X_2^5-42927147796480X_1^{20}X_2^{10}\\
   & &\ \ -655687680X_1^{10}X_2^{20}+133632X_1^5X_2^{25}+X_2^{30}.
\end{eqnarray*}
}
\end{exa}

Note that $F(t,\mathbf{X})$ in (\ref{Fdifeq}) depends upon the choice of solutions $x_1,\ldots,x_n$ and the invariants $P_1,\ldots,P_n$.

\subsection{Schwarz maps, projective equivalence and pullbacks}

The Schwarz map \cite{YOSHIDA1997} of $L(x)=0$ associated to the solutions $x_1,\ldots,x_n$ is the analytic extension of the parametric curve in $\mathbb{P}^{n-1}(\mathbb{C})$
$$\left[\mathbf{X}\right](t)=(x_1:\ldots:x_n).$$ 
The image of the the Schwarz map is called the Fano curve \cite{SANABRIA2014}.

\begin{rem}
Since the solutions $x_1,\ldots,x_n$ are algebraic over $\mathbb{C}(t)$, the degree of transcendence of $\mathbb{C}(t)[x_1,\ldots,x_n]$ over $\mathbb{C}(t)$ is zero. Therefore the degree of transcendence of $\mathbb{C}[x_1,\ldots,x_n]$ over $\mathbb{C}$ is one. Thus, the algebraic closure of the analytic curve parametrized by $(x_1,\ldots,x_n)$ in $\mathbb{C}^n$ has dimension one, and so does its projection into $\mathbb{P}^{n-1}(\mathbb{C})$, which is the Fano curve. 
\end{rem}

Note that the Schwarz map and the Fano curve are uniquely determined up to a rational transformation of $\mathbb{P}^{n-1}(\mathbb{C})$.

\begin{defn}
Let $L(x)=0$ be a linear ordinary differential equation, where
\begin{equation*}
L(x)=\left(\frac{d}{dt}\right)^n x+a_{n-1}\left(\frac{d}{dt}\right)^{n-1}x+\ldots+a_1\frac{d}{dt}x+a_0x,
\end{equation*}
$x=x(t)$ and $a_{n-1},\ldots,a_1,a_0\in\mathbb{C}(t)$ and let $L_1(y)=0$ be a linear ordinary differential equation, where
\begin{equation*}\label{LODE1}
L_1(y)=\left(\frac{d}{dt}\right)^n y+b_{n-1}\left(\frac{d}{dt}\right)^{n-1}y+\ldots+b_1\frac{d}{dt}y+b_0y,
\end{equation*}
$y=y(t)$ and $b_{n-1},\ldots,b_1,b_0\in\mathbb{C}(t)$. We say that $L_1(y)=0$ is rationally projectively equivalent to $L(x)=0$ if there exists a function $f$ such that:
\begin{itemize}
\item $\frac{1}{f}\frac{d}{dt}f\in\mathbb{C}(t)$
\item every solution to $L_1(y)=0$ is of the form $y_0(t)=f(t)x_0(t)$ where $x_0$ is a solution to $L(x)=0$.
\end{itemize}
\end{defn}

In particular, when $L_1(y)$ is projectively equivalent to $L(x)=0$, the Schwarz map of $L_1(y)$ associated to the solutions $fx_1,\ldots,fx_n$ coincides with the Schwarz map of $L(x)=0$ associated to $x_1,\ldots,x_n$. Therefore post-composing these two Schwarz maps with the projection into the projective orbits
\begin{eqnarray*}
\Pi_{G,\mathbb{P}}: \mathbb{P}^{n-1}(\mathbb{C}) & \longrightarrow & \mathbb{P}(\mathbb{C}^{n}/G),
\end{eqnarray*}
will result in the same image.

We illustrate the concept of projective equivalence with an example.

\begin{exa}\label{PepinII}
We continue with P\'epin's equation from Example \ref{PepinI} keeping the same notation. As homogeneous coordinates of the space of projective orbits we take the two invariants of degree $60$, $P_1^5$ and $P_2^3$. In this homogeneous coordinate system
\begin{eqnarray*}
\Pi_{G,\mathbb{P}}(x_1,x_2) & = & (P_1(x_1,x_2)^5:P_2(x_1,x_2)^3)\\
 & = & (t^{20}(t-1)^{20}\ :\ t^{18}(t-1)^{18}(t^2-t+1)^3)\\
 & = & (1\ :\ \frac{(t^2-t+1)^3}{t^2(t-1)^2}).
\end{eqnarray*}
Now, let us consider the projectively equivalent equation
\begin{equation}\label{Pepin2}
\left(\frac{d}{dt}\right)^2y+\frac{2}{3}\frac{2t-1}{t(t-1)}\frac{d}{dt}y-\frac{11}{900}\frac{t^2-t+1}{t^2(t-1)^2}y=0.
\end{equation}
Its solutions are of the form
$$ y_0(t)=\frac{1}{t^{1/3}(t-1)^{1/3}}x_0(t),$$
where $x_0$ is a solution to $L(x)=0$ and we have
\begin{eqnarray*}
P_1( y_1, y_2 )^5 & = & 1\\
P_2( y_1, y_2 )^3 & = & \frac{(t^2-t+1)^3}{t^2(t-1)^2},
\end{eqnarray*}
where
$$ y_i(t)=\frac{1}{t^{1/3}(t-1)^{1/3}}x_i(t),\quad i=1,2.$$
Note that if we change the parameter $t$ by
$$s=\frac{(t^2-t+1)^3}{t^2(t-1)^2},$$
we have
\begin{eqnarray}
P_1( r_1, r_2)^5 & = & 1 \label{r1}\\
P_2( r_1, r_2)^3 & = & s \label{r2},
\end{eqnarray}
where
\begin{eqnarray*}
y_i(t) & = & r_i\left(\frac{(t^2-t+1)^3}{t^2(t-1)^2}\right)\\
          & = & r_i(s),\quad i=1,2.
\end{eqnarray*}
If we now take the derivative of the equations (\ref{r1}) and (\ref{r2}) with respect to $s$ we will obtain a dynamical system
\begin{eqnarray*}
\frac{d}{ds} X_1 & = & \frac{1}{D(X_1,X_2)}X_1(65536X_1^{10}-16896X_1^5X_2^5-11X_2^{10})\\
\frac{d}{ds} X_2 & = & \frac{1}{D(X_1,X_2)}X_2(720896X_1^{10}-16896X_1^5X_2^5-X_2^{10}),
\end{eqnarray*}
where
{\small
\begin{eqnarray*}
D(X_1,X_2) & = & \frac{44814958964215245}{35184372088832}X_1^{70}
-\frac{1478893645819103085}{4503599627370496}X_1^{65}X_2^5\\
 & &\ 
-\frac{1507303767249340213172175}{2305843009213693952}X_1^{60}X_2^{10}\\
 & &\ \ 
-\frac{200131326479517435045}{35184372088832}X_1^{55}X_2^{15}\\
 & & \ \ \
-\frac{41163273776082534986385}{72057594037927936}X_1^{50}X_2^{20}\\
 & & \ \ \ \
-\frac{76393418368808015853255}{9223372036854775808}X_1^{45}X_2^{25}\\
 & & \ \ \ \ \
-\frac{119880246375756138266115}{4722366482869645213696}X_1^{40}X_2^{30}\\
 & & \ \ \ \ \ \  
-\frac{3712236328125}{18014398509481984}X_1^{35}X_2^{35}\\
 & & \ \ \ \ \ \ \ 
-\frac{57031746165946245}{147573952589676412928}X_1^{30}X_2^{40}\\
 & & \ \ \ \ \ \ \ \
+\frac{36602149417814565}{18889465931478580854784}X_1^{25}X_2^{45}\\
 & & \ \ \ \ \ \ \ \ \
-\frac{20359295143555005}{9671406556917033397649408}X_1^{20}X_2^{50}\\
 & & \ \ \ \ \ \ \ \ \ \
+\frac{59979105}{147573952589676412928}X_1^{15}X_2^{55}\\
 & & \ \ \ \ \ \ \ \ \ \ \
-\frac{2925975}{302231454903657293676544}X_1^{10}X_2^{60}\\
 & & \ \ \ \ \ \ \ \ \ \ \ \
+\frac{495}{38685626227668133590597632}X_1^5X_2^{65}\\
 & & \ \ \ \ \ \ \ \ \ \ \ \ \
+\frac{15}{19807040628566084398385987584}X_2^{70},
\end{eqnarray*}
}with solution $\mathbf{X}(s)=(r_1,r_2)$. The two functions $r_1,r_2$ are solutions to the linear ordinary differential equation
$$\mathcal{L}(r)=\left(\frac{d}{ds}\right)^2r+\frac{2}{3}\frac{7s-27}{s(4s-27)}\frac{d}{ds}r-\frac{11}{900}\frac{1}{s(4s-27)}r=0$$
and
$$\Pi_{G,\mathbb{P}}(r_1,r_2)=(1:s).$$
\end{exa}

\begin{defn}
Let $L(x)=0$ be a linear ordinary differential equation, where
\begin{equation*}
L(x)=\left(\frac{d}{dt}\right)^n x+a_{n-1}\left(\frac{d}{dt}\right)^{n-1}x+\ldots+a_1\frac{d}{dt}x+a_0x,
\end{equation*}
$x=x(t)$ and $a_{n-1},\ldots,a_1,a_0\in\mathbb{C}(t)$ and let $\mathcal{L}(r)=0$ be a linear ordinary differential equation, where
\begin{equation}\label{LODEs}
\mathcal{L}(r)=\left(\frac{d}{ds}\right)^n r+b_{n-1}\left(\frac{d}{ds}\right)^{n-1}r+\ldots+b_1\frac{d}{ds}r+b_0r
\end{equation}
and $b_{n-1},\ldots,b_1,b_0\in\mathbb{C}(s)$. We say that $L(x)=0$ is a pullback of $\mathcal{L}(r)$ by a rational map if there exists $p\in\mathbb{C}(t)$, such that every solution to $L(x)=0$ is of the form $x(t)=r\circ p(t)$ where $r$ is a solution to $\mathcal{L}(r)=0$. 
\end{defn}

We illustrate the concept of pullback with an example.

\begin{exa}\label{PepinIII}
We continue with equation (\ref{Pepin2}) from Example \ref{PepinII}
$$\left(\frac{d}{dt}\right)^2y+\frac{2}{3}\frac{2t-1}{t(t-1)}\frac{d}{dt}y-\frac{11}{900}\frac{t^2-t+1}{t^2(t-1)^2}y=0,$$
that is projectively equivalent to P\'epin's equation. This equation is a pullback of
\begin{equation}\label{Pepin3}
\left(\frac{d}{ds}\right)^2r+\frac{2}{3}\frac{7s-27}{s(4s-27)}\frac{d}{ds}r-\frac{11}{900}\frac{1}{s(4s-27)}r=0
\end{equation}
by the rational map $s=p(t)=(t^2-t+1)^3/(t^2-t)^2$. Equation (\ref{Pepin3}), under the M\"obius transformation $$z=\frac{4}{27}s$$ becomes the hypergeometric equation defining ${}_2 F_1(-1/60,11/60;2/3\ |\ z)$
$$\left(\frac{d}{dz}\right)^2\sigma+\frac{1}{6}\frac{7z-4}{z(z-1)}\frac{d}{dz}\sigma-\frac{11}{3600}\frac{1}{z(z-1)}\sigma=0.$$
In this way, from the pullback and the projective equivalence we can obtain closed form solutions to P\'epin's equation in terms of well-known functions. For example one solution is
\begin{eqnarray*}
x_0(t) & = & t^{1/3}(t-1)^{1/3}y_0(t)\\
           & = & t^{1/3}(t-1)^{1/3}r_0\left(\frac{(t^2-t+1)^3}{t^2(t-1)^2}\right)\\
           & = & t^{1/3}(t-1)^{1/3}\sigma_0\left(\frac{4}{27}\frac{(t^2-t+1)^3}{t^2(t-1)^2}\right)\\
           & = & (t^2-t)^{1/3}{}_2 F_1\left(\frac{-1}{60},\frac{11}{60};\frac{2}{3}\ |\ \frac{4}{27}\frac{(t^2-t+1)^3}{(t^2-t)^2}\right).
\end{eqnarray*}
Another linearly independent solution can be similarly obtained from another linearly independent solution to the equation for $\sigma(z)$, for example $$\sigma(z)=z^{1/3}{}_2 F_1(19/60,31/60;4/3\ |\ z).$$
\end{exa}

\subsection{Quotient Fano curve, minimal Schwarz maps and Standard Equations}

From the Galois correspondence it follows that the composition of the Schwarz map with the projection into the projective orbits
$$\Pi_{G,\mathbb{P}}\circ\left[\mathbf{X}\right](t)=\Pi_{G,\mathbb{P}}(x_1:\ldots:x_n)$$
parametrizes  an algebraic curve in $\mathbb{P}(\mathbb{C}^{n}/G)$. Therefore we can make an analytic extension of this composition to the whole Riemann sphere to obtain a single-valued algebraic map
\begin{eqnarray*}
\Pi_{G,\mathbb{P}}\circ\left[\mathbf{X}\right]: \mathbb{P}(\mathbb{C})& \longrightarrow & \mathbb{P}(\mathbb{C}^{n}/G)\\
t & \longmapsto & \Pi_{G,\mathbb{P}}\circ\left[\mathbf{X}\right](t),
\end{eqnarray*}
where we use the same notation for the original composition and the extension.  We will call the closure of the image of the map $\Pi_{G,\mathbb{P}}\circ\left[\mathbf{X}\right]$ the quotient Fano curve. If this map is birational we say that the Schwarz map  $\left[\mathbf{X}\right](t)=(x_1:\ldots:x_n)$ is minimal \cite{KATO2006} and in such case we say that the equation $L(x)=0$ is standard \cite{BERKENBOSCH2006}.

We illustrate the concept of standard equation with two examples.

\begin{exa}
Continuing with equation (\ref{Pepin3}) from Example \ref{PepinIII}, $\mathcal{L}(r)=0$ is an standard equation but P\'epin's equation is not.
\end{exa}

\begin{exa}\label{Klein}
In this example we will consider Klein's quartic. As a Fano curve it is parametrized by the Schwarz map associated to Hurwitz' equation
\begin{eqnarray*}
0 & = & \left(\frac{d}{dt}\right)^3x+\frac{7t-4}{t(t-1)}\left(\frac{d}{dt}\right)^2x+\frac{1}{252}\frac{2592t^2-2963t+560}{t^2(t-1)^2}\left(\frac{d}{dt}\right)x \\
 & & \ +\frac{1}{24696}\frac{57024t-40805}{t^2(t-1)^2}x.
\end{eqnarray*}
This equation has differential Galois group $G_{168}$ \cite{SINGER1995}. Computing its invariants we obtain
\begin{eqnarray*}
P_4(x_1,x_2,x_3) & = & 0\\
P_6(x_1,x_2,x_3) & = & \frac{1}{(t-1)^3t^4}\\
P_{14}(x_1,x_2,x_3) & = & \frac{1}{(t-1)^7t^9}.
\end{eqnarray*}
As homogeneous coordinates of the space of projective orbits we take three invariants of degree $42$, $P_4^9P_6$, $P_6^7$, $P_{14}^3$, so that
\begin{eqnarray*}
\Pi_{G,\mathbb{P}}(x_1,x_2,x_3) & = & (0:\ \frac{1}{(t-1)^{21}t^{28}}:\ \frac{1}{(t-1)^{21}t^{27}})\\
 & = & (0:\ 1:\ t).
\end{eqnarray*}
Hence Hurwitz' equation is standard, and the associated Schwarz map is minimal.
\end{exa}

The importance of standard equation is stated in the following theorem \cite[Corollary 12]{SANABRIA2014}.

\begin{thm}
Let $L(x)=0$ be an algebraic irreducible linear ordinary differential equation. Then $L(x)=0$ is rationally projectively equivalent to the pullback by a rational map of an standard equation.
\end{thm}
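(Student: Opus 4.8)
The plan is to read the standard equation off the quotient Fano curve of $L(x)=0$ and then recover $L(x)=0$ from it by a rational pullback together with a rational projective twist. First I would pin down the base change. By the Galois correspondence each generator $P_i$ of $\mathbb{C}[X_1,\ldots,X_n]^G$ satisfies $\Phi(P_i)\in\mathbb{C}(t)$, so the homogeneous coordinates of $\Pi_{G,\mathbb{P}}\circ[\mathbf{X}]$ may be taken in $\mathbb{C}(t)$; hence the quotient Fano curve $C$ is the image of a rational map $\mathbb{P}(\mathbb{C})\to\mathbb{P}(\mathbb{C}^n/G)$ and $\mathbb{C}(C)$ is a subfield of $\mathbb{C}(t)$. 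By L\"uroth's theorem $\mathbb{C}(C)=\mathbb{C}(s)$ for a single $s=p(t)\in\mathbb{C}(t)$; the corresponding rational map $p\colon\mathbb{P}(\mathbb{C})\to\mathbb{P}(\mathbb{C})$, $t\mapsto s$, identifies $C$ with its target, and $\Pi_{G,\mathbb{P}}\circ[\mathbf{X}]$ factors through it. Since $[\mathbb{C}(t):\mathbb{C}(s)]=\deg p<\infty$ and $[K:\mathbb{C}(t)]=|G|<\infty$, every $x_j$ is algebraic over $\mathbb{C}(s)$.

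Next I would lift and twist. The restriction of $\Pi_{G,\mathbb{P}}$ to the Fano curve $\tilde C$ of $L(x)=0$ is a finite morphism of curves onto $C\cong\mathbb{P}(\mathbb{C})$; choosing a branch of a local inverse gives a multivalued parametrization of $\tilde C$ by $\mathbb{P}(\mathbb{C})_s$. The point is to produce a tuple $(r_1,\ldots,r_n)$ of functions algebraic over $\mathbb{C}(s)$ lifting this parametrization to $\mathbb{C}^n$ and whose monodromy is \emph{linear}. The naive lift $x_j(t)$, $s=p(t)$, has monodromy failing to be linear exactly at the branch points of $p$; so I would multiply it by a scalar function $f$, chosen, via the description of $I$ given by Compoint's corollary, as a root of the product of a monomial in $\Phi(P_1),\ldots,\Phi(P_N)\in\mathbb{C}(t)$ with an element of $\mathbb{C}(s)$, so that $r_j(s):=f(t)x_j(t)$, $s=p(t)$, has monodromy in $GL_n(\mathbb{C})$. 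Such an $f$ automatically satisfies $\tfrac1f\tfrac{d}{dt}f\in\mathbb{C}(t)$, so $y_j:=fx_j$ solve an equation rationally projectively equivalent to $L(x)=0$; and $r_1,\ldots,r_n$, being linearly independent over $\mathbb{C}$ (a dependence pulls back to one among the $y_j$), form a full system of solutions of a unique monic order-$n$ operator $\mathcal{L}$ whose coefficients — monodromy invariant and algebraic over $\mathbb{C}(s)$, hence single-valued algebraic — lie in $\mathbb{C}(s)$. Thus $\mathcal{L}(r)=0$ is a linear ordinary differential equation over $\mathbb{C}(s)$.

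It then remains to verify the two required properties. Since $y_j=r_j\circ p$, every solution of $L(x)=0$ equals $\tfrac1f$ times the pullback by $p$ of a solution of $\mathcal{L}(r)=0$, and $\{r_j\circ p\}$ is a full system of solutions of $p^{*}\mathcal{L}$; together with $\tfrac1f\tfrac{d}{dt}f\in\mathbb{C}(t)$ this gives the rational projective equivalence to the pullback. For standardness, let $G'$ be the differential Galois group of $\mathcal{L}(r)=0$ and $[\mathbf{R}](s)=(r_1:\ldots:r_n)$ its Schwarz map. For each relation $\sum_i e_i\deg P_i=0$ the monomial $\prod_i P_i(r_1,\ldots,r_n)^{e_i}$ equals, as a function of $s$, the element $\prod_i\Phi(P_i)^{e_i}\in\mathbb{C}(s)$ (the factors of $f$ cancelling); these are single-valued in $s$, hence belong to the function field of the quotient Fano curve of $\mathcal{L}(r)=0$, and they generate $\mathbb{C}(s)$ by the choice of $s$. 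Conversely, any ratio of equal-degree $G'$-invariants evaluated at $r_1,\ldots,r_n$ is single-valued in $s$ and, since $G\subseteq G'$, is a ratio of equal-degree $G$-invariants, hence lies in $\mathbb{C}(s)$. So the function field of the quotient Fano curve of $\mathcal{L}(r)=0$ is exactly $\mathbb{C}(s)$, $\Pi_{G',\mathbb{P}}\circ[\mathbf{R}]$ is birational, and $\mathcal{L}(r)=0$ is standard.

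The main obstacle is the lifting-and-twisting step. One has to exhibit a \emph{single} scalar function $f$ with $\tfrac1f\tfrac{d}{dt}f\in\mathbb{C}(t)$ that simultaneously repairs the monodromy at every ramification point of $p$ — equivalently, a common denominator for the various fractional local exponents that $p$ forces, chosen so that the twisted lift becomes the Schwarz map of an honest linear equation over $\mathbb{C}(s)$. This is a compatibility statement between the grading of $\mathbb{C}[X_1,\ldots,X_n]^G$ and the branching data of $p$, and it is there that the bulk of the work lies; once such an $f$ is in hand, the reduction to the rational curve $C$ and the verification that the descended equation is standard are straightforward applications of the Galois correspondence and L\"uroth's theorem.
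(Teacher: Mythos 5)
A preliminary caveat: the paper does not prove this theorem at all --- it is quoted as Corollary 12 of \cite{SANABRIA2014} --- so there is no in-paper proof to compare against. Your overall strategy (use L\"uroth's theorem to write the function field of the quotient Fano curve as $\mathbb{C}(s)$ with $s=p(t)$, then descend a scalar twist of the solution vector along $p$ to an equation over $\mathbb{C}(s)$, and check birationality of the quotient map to conclude standardness) is the right one. But the proposal has a genuine gap, and it is exactly the one you flag yourself: you never construct the scalar $f$. The existence of a single $f$ with $\frac{1}{f}\frac{d}{dt}f\in\mathbb{C}(t)$ such that $(fx_1,\ldots,fx_n)$ becomes a multivalued function of $s$ with monodromy in $GL_n(\mathbb{C})$ is the entire content of the theorem beyond formal bookkeeping; an argument that defers this to ``a compatibility statement \ldots where the bulk of the work lies'' is an outline, not a proof.

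The gap does close, and with the same normalization the paper uses in the proof of Theorem \ref{thmI}. Pick a homogeneous $Q\in\mathbb{C}[X_1,\ldots,X_n]^G$ of degree $m$ with $Q(x_1,\ldots,x_n)\neq 0$ (one exists: since $G$ is finite and acts linearly, the only common zero of all positive-degree homogeneous invariants is the origin), and set $f=Q(x_1,\ldots,x_n)^{-1/m}$, so that $Q(y_1,\ldots,y_n)=1$ and $\frac{1}{f}\frac{d}{dt}f\in\mathbb{C}(t)$. If two branches of $\mathbf{Y}=(y_1,\ldots,y_n)$ lie over the same value of $s$, they lie in the same fibre of $\mathbb{C}^n\setminus\{0\}\to\mathbb{P}(\mathbb{C}^n/G)$, which over a generic point of the Fano curve is a single $\mathbb{C}^\ast\cdot G$-orbit; the constraint $Q=1$ forces the scalar factor to be an $m$-th root of unity, so the two branches differ by an element of the finite group $\mu_m\cdot G\subset GL_n(\mathbb{C})$, and by discreteness that element is locally constant in $s$. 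This supplies the linear monodromy you need, and it also corrects a small error later in your argument: the differential Galois group $G'$ of $\mathcal{L}(r)=0$ over $\mathbb{C}(s)$ is contained in $\mu_m\cdot G$ but need not contain $G$, so the clause ``since $G\subseteq G'$'' should be replaced by the direct observation that any equal-degree ratio of $G'$-invariants evaluated at $(r_1,\ldots,r_n)$ is monodromy-invariant and algebraic over $\mathbb{C}(s)$, hence already lies in $\mathbb{C}(s)$.
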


\subsection{Associated first order differential equation, dynamical systems and standard equations}

Let us consider the first order differential equation (\ref{difeq}) built in Section \ref{1stDE} from the system in (\ref{difgen}). If $L(x)=0$ is standard, then this differential equation is a dynamical systems.

\begin{defn}\label{defasso}
Let $L(x)=0$ be an algebraic irreducible linear ordinary differential equation. Let $x_1,\ldots,x_n$ be $n$ linearly independent solutions to $L(x)=0$ and $G$ the representation of the differential Galois group of $L(x)=0$ associated to this choice of solutions. Let $\Lambda\in\mathbb{Z}_{>0}$ be such that the homogeneous elements of $\mathbb{C}[X_1,\ldots,X_n]^G$ of degree $\Lambda$, $\mathbb{C}[X_1,\ldots,X_n]^G_\Lambda$, form a homogeneous coordinate system for $\mathbb{P}(\mathbb{C}^n/G)$, i.e. $$\mathbb{P}(\mathbb{C}[X_1,\ldots,X_n]^G_\Lambda)\simeq\mathbb{P}(\mathbb{C}^n/G).$$ Let $P_1,\ldots,P_n\in\mathbb{C}[X_1,\ldots,X_n]^G_\Lambda$ be such that 
\begin{equation}\label{assoM}
M(\mathbf{X})=\left[\begin{array}{ccc}
\partial P_1/\partial X_1 & \ldots & \partial P_1/\partial X_n\\
\vdots & \ddots & \vdots\\
\partial P_n/\partial X_1 & \ldots & \partial P_n/\partial X_n
\end{array}\right](\mathbf{X})
\end{equation}
is invertible, and let $f_1,\ldots,f_n\in\mathbb{C}(t)$ be such that $$f_i=P_i(x_1,\ldots,x_n),\quad i=1,\ldots,n.$$ The first order differential equation associated  to $L(x)=0$, given the solutions $x_1,\ldots, x_n$, the degree $\Lambda$, and the invariants $P_1,\ldots,P_n$ is
\begin{equation}\label{assodifeq}
\frac{d}{dt}\mathbf{X}=F(t,\mathbf{X})
\end{equation}
where
\begin{equation}\label{assoFdifeq}
F(t,\mathbf{X})=\big[M(\mathbf{X})\big]^{-1}\left[\begin{array}{c} \frac{d}{dt}f_1\\ \vdots \\ \frac{d}{dt}f_n\end{array}\right].
\end{equation}
\end{defn}

\begin{exa}
In Example \ref{PepinII}, we constructed the first order differential equation associated to $\mathcal{L}(r)=0$ given the solutions $r_1,r_2$, the degree $60$, and the invariants $P_1^5,P_2^3$.
\end{exa}

In the following theorem we characterize standard equations, up to projective equivalence, in terms of their associated first order differential equation.

\begin{thm}\label{thmI}
Let $L(x)=0$ be an algebraic irreducible linear ordinary differential equation. Then $L(x)=0$ is standard if and only if it is rationally projectively equivalent to a linear ordinary differential equation with an associated first order differential equation that is a dynamical system.
\end{thm}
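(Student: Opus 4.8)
My plan is to reduce the theorem to a single criterion, which I would prove first as a lemma: \emph{the associated first order differential equation (\ref{assodifeq})--(\ref{assoFdifeq}) is a dynamical system (autonomous) if and only if every $f_i$ is an affine-linear function of $t$.} To see this, write $F(t,\mathbf X)=M(\mathbf X)^{-1}\mathbf g(t)$ with $\mathbf g(t)=\bigl(\tfrac{d}{dt}f_1,\dots,\tfrac{d}{dt}f_n\bigr)^{T}$, so that $\partial F/\partial t=M(\mathbf X)^{-1}\mathbf g'(t)$; since $M(\mathbf X)$ is invertible on a dense open set, this is identically zero exactly when $\mathbf g'\equiv 0$, i.e.\ when each $f_i$ has constant derivative. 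With this lemma the two implications become a dictionary between ``the $f_i$ can be made affine-linear by a rational projective equivalence'' and ``the quotient Fano curve is birationally parametrized by $t$'', which is standardness.

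For ``autonomous $\Rightarrow$ standard'': suppose $L(x)=0$ is rationally projectively equivalent to some $L_1(y)=0$ with autonomous associated first order equation. I would first note that standardness is an invariant of the rational projective equivalence class (the Schwarz map, hence the map $\Pi_{G,\mathbb P}\circ[\mathbf X]$ and the quotient Fano curve, are unchanged), so it suffices to prove $L_1(y)=0$ standard. By the lemma the functions $f_i=P_i(y_1,\dots,y_n)$ are affine-linear, so in the coordinates $P_1,\dots,P_n$ the map $\Pi_{G,\mathbb P}\circ[\mathbf Y]$ reads $t\mapsto(f_1(t):\dots:f_n(t))$, a parametrization by linear forms. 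Since the quotient Fano curve has dimension one and the $P_i$ form a coordinate system for $\mathbb P(\mathbb C^n/G)$ (so that this coordinate projection is birational onto its image, Definition \ref{defasso}), the parametrization is non-constant, hence of degree one onto its image (a line); therefore $\Pi_{G,\mathbb P}\circ[\mathbf Y]$ is birational onto the quotient Fano curve, i.e.\ $L_1(y)=0$ is standard.

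For ``standard $\Rightarrow$ autonomous after a rational projective equivalence'': assuming $L(x)=0$ standard, the function field of the quotient Fano curve is $\mathbb C(t)$, generated by ratios of $G$-invariants of equal degree evaluated at $\mathbf x=(x_1,\dots,x_n)$. By the primitive element theorem I would pick homogeneous invariants $Q_1,Q_2$ of one degree with $Q_1(\mathbf x)/Q_2(\mathbf x)$ a Möbius function of $t$, and, after a linear substitution in the pencil $\langle Q_1,Q_2\rangle$, with $Q_1(\mathbf x)/Q_2(\mathbf x)=t$. Letting $\mathfrak p\subset\mathbb C[X_1,\dots,X_n]^{G}$ be the (height $n-1$) prime of the cone over the quotient Fano curve, I would choose $n-2$ algebraically independent homogeneous elements of $\mathfrak p$ and, after raising $Q_1,Q_2$ and these to suitable powers, arrange that $P_1:=Q_1$, $P_2:=Q_2$, $P_3,\dots,P_n\in\mathfrak p$ all share a common degree $\Lambda$ with $M(\mathbf X)$ invertible. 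Then $P_i(\mathbf x)=0$ for $i\ge3$ and $(P_1(\mathbf x):P_2(\mathbf x))=(t:1)$, so the rational projective equivalence $y_i=f x_i$ with $f^{\Lambda}=1/P_2(\mathbf x)$ --- legitimate because $\tfrac1f\tfrac{d}{dt}f=-\tfrac1\Lambda\tfrac{d}{dt}\log P_2(\mathbf x)\in\mathbb C(t)$ --- produces $L_1(y)=0$ with $P_1(\mathbf y)=t$, $P_2(\mathbf y)=1$ and $P_3(\mathbf y)=\dots=P_n(\mathbf y)=0$, all affine-linear; by the lemma its associated first order equation is a dynamical system.

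The step I expect to be the real obstacle is producing, in the last paragraph, degree-$\Lambda$ invariants $P_1,\dots,P_n$ that simultaneously make $M(\mathbf X)$ invertible and take the prescribed affine-linear values on the Fano curve. Forcing $P_3,\dots,P_n\in\mathfrak p$ while keeping $\det M\not\equiv 0$ calls for a dimension count: for $\Lambda\gg 0$ the degree-$\Lambda$ part of $\mathfrak p$ has codimension growing only linearly in $\Lambda$ inside the space of degree-$\Lambda$ invariants (whose dimension grows like $\Lambda^{\,n-1}$), so a generic choice there, together with the Möbius pair, should be algebraically independent; one must also verify that the resulting data really satisfies Definition \ref{defasso}, in particular that the solution curve lies in the locus where $M$ is invertible. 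I anticipate this being careful bookkeeping rather than a conceptual difficulty, but it is where the substance of the argument sits.
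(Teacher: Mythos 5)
Your proposal is correct and follows essentially the same route as the paper: the inline lemma (autonomous $\Leftrightarrow$ each $f_i$ affine-linear) is exactly the paper's observation, the easy direction uses the same affine-linear parametrization of the quotient Fano curve (the paper completes $P_1,\ldots,P_n$ to a basis of all of $\mathbb{C}[X_1,\ldots,X_n]^{G}_{\Lambda}$ rather than appealing to the $n$ chosen invariants alone, which is the cleaner way to phrase your non-constancy step), and the hard direction uses the same choice of $n-2$ invariants in the ideal of the cone over the quotient Fano curve together with a pair $P,Q$ realizing $t$ and the rescaling $y_i=Q(\mathbf{x})^{-1/\deg Q}x_i$. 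The bookkeeping you flag as the real obstacle (common degree $\Lambda$, algebraic independence, invertibility of $M$) is passed over silently in the paper, so your dimension-count sketch is, if anything, more explicit than the original.
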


\begin{proof} Let us assume that $L(x)=0$ is standard. We fix $n$ linearly independent solutions $x_1,\ldots,x_n$. By the definition of standard equation, the map $$t\mapsto \Pi_{G,\mathbb{P}}\circ\left[\mathbf{X}\right](t)$$ defines a birational morphism between the Riemann sphere parameterized by $t$ and the quotient Fano curve. Let us denote by $C$ the quotient Fano curve. Then the map
\begin{eqnarray*}
\Pi_{G,\mathbb{P}}\circ\left[\mathbf{X}\right]: \mathbb{P}(\mathbb{C})& \longrightarrow & C\subseteq\mathbb{P}(\mathbb{C}^{n}/G)\\
t & \longmapsto & \Pi_{G,\mathbb{P}}\circ\left[\mathbf{X}\right](t)
\end{eqnarray*}
is birational.

The homogeneous ideal in $\mathbb{C}[X_1,\ldots,X_n]^G$ defining $C$ is generated by the homogeneous elements in $I\cap \mathbb{C}[X_1,\ldots,X_n]^G$
\begin{eqnarray*}
I(C) & = & \left(I\cap \mathbb{C}[X_1,\ldots,X_n]^G\right)^h\\
      & = & \langle P\in\mathbb{C}[X_1,\ldots,X_n]^G |\ P\textrm{ is homogeneous and } P(x_1,\ldots,x_n)=0\rangle.
\end{eqnarray*}
The homogeneous coordinate ring of $C$ is $\mathbb{C}[X_1,\ldots,X_n]^G/I(C)$. Since $C$ is a curve, there exist $n-2$ polynomials $Q_1,\ldots,Q_{n-2}\in I(C)$ that are algebraically independent over $\mathbb{C}$.

Furthermore, since $\Pi_{G,\mathbb{P}}\circ\left[\mathbf{X}\right]$ is a birational morphism, the associated dual map is an isomorphism from the field of meromorphic functions $\mathbb{C}(C)$ to $\mathbb{C}(t)$. In particular, there exist two homogeneous polynomials of the same degree, $P,Q\in\mathbb{C}[X_1,\ldots,X_n]^G$, such that $$\frac{P(x_1,\ldots,x_n)}{Q(x_1,\ldots,x_n)}=t.$$

Let $$f=Q(x_1,\ldots,x_n)\in\mathbb{C}(t).$$ Note that $f\ne 0$, and that $Q_1,\ldots,Q_{n-2},Q,P$ can be taken so that they have the same degree $\Lambda$.

Now we consider $L_1(y)$, the linear ordinary differential equation projectively equivalent to $L(x)=0$ with solutions $$y_i(t)=f^{-1/\deg(Q)}x_i(t),\quad i=1,\ldots,n.$$
We have $$Q(y_1,\ldots,y_n)  =  1 \textrm{ and } \frac{P(y_1,\ldots,y_n)}{Q(y_1,\ldots,y_n)}  =  t,$$
hence
\begin{eqnarray*}
Q_i(y_1,\ldots,y_n) & = & 0, \textrm{ for } i=1,\ldots,n-2,\\
Q(y_1,\ldots,y_n)  & = &  1,\\
P(y_1,\ldots,y_n) & = & t.
\end{eqnarray*}
Therefore, the first order differential equation associated to $L_1(y)=0$ given the solutions $y_1,\ldots,y_n$, the degree $\Lambda$, and the invariants $P_i=Q_i$ for $i=1,\ldots, n-2$, $P_{n-1}=Q$ and $P_n=P$ is a dynamical system. This finishes the first part of the proof.

Now we assume that $L(x)=0$ is projectively equivalent to $L_1(y)=0$ and that the first order differential equation associated to $L_1(y)=0$ given the solutions $y_1,\ldots,y_n$, the degree $\Lambda$ and the invariants $P_1,\ldots,P_n$, is a dynamical system
\begin{equation*}\label{assodifeqnot}
\frac{d}{dt}\mathbf{X}=F(\mathbf{X})
\end{equation*}
where $$f_i=P_i(y_1,\ldots,y_n)\in\mathbb{C}(t),\quad i=1,\ldots,n,$$ and
\begin{equation*}\label{assoFdifeqnot}
F(\mathbf{X})=\big[M(\mathbf{X})\big]^{-1}\left[\begin{array}{c} \frac{d}{dt}f_1\\ \vdots \\ \frac{d}{dt}f_n\end{array}\right].
\end{equation*}
Since the right hand side of the last equation doesn't depend on $t$, we have $$\frac{d}{dt}f_i\in\mathbb{C},\ i=1,\ldots,n$$ hence $f_i=\alpha_it+\beta_i,\ \alpha_i,\beta_i\in\mathbb{C},\ i=1,\ldots,n$.

Let $G_1$ be the representation of the differential Galois group of $L_1(y)=0$ given by $y_1,\ldots ,y_n$, and $$d=\dim_{\mathbb{C}}\left(\mathbb{C}[X_1,\ldots,X_n]^{G_1}_\Lambda\right).$$ We can complete the elements  $P_1,\ldots,P_n$ to form a basis $(P_1,\ldots,P_n,\ldots,P_{d})$ of the $\mathbb{C}$-vector space $\mathbb{C}[X_1,\ldots,X_n]^{G_1}_\Lambda$. In particular $(P_1:\ldots:P_n:\ldots:P_{d})$ is a homogeneous coordinate system for $\mathbb{P}(\mathbb{C}^n/G_1)$.

Let us denote $\left[\mathbf{Y}\right](t)=(y_1:\ldots:y_n)$. If we denote 
$f_i=P_i(y_1,\ldots,y_n)$, for $i=n+1,\ldots,d,$ then the image of the map $\Pi_{G_1,\mathbb{P}}\circ\left[\mathbf{Y}\right]$ in the homogeneous coordinate system $(P_1:\ldots:P_n:P_{n+1}:\ldots:P_{d})$ is $$\Pi_{G_1,\mathbb{P}}\circ\left[\mathbf{Y}\right](t)=(\alpha_1t+\beta_1:\ldots:\alpha_nt+\beta_n:f_{n+1}:\ldots:f_d).$$ Hence $\Pi_{G_1,\mathbb{P}}\circ\left[\mathbf{Y}\right]$ is a birational morphism and $L_1(y)=0$ is standard. The proof is completed by noting that the morphism to the quotient Fano curve is the same for two rationally projective linear ordinary differential equation, therefore $L(x)=0$ is also standard.
\end{proof}

We illustrate the theorem with the following example.

\begin{exa}\label{KleinII}
Hurwitz's equation in Example \ref{Klein} is projectively equivalent to
\begin{eqnarray*}
0 & = & \left(\frac{d}{dt}\right)^3y+\frac{1}{2}\frac{7t-4}{t(t-1)}\left(\frac{d}{dt}\right)^2y+\frac{1}{252}\frac{387t-56}{t^2(t-1)}\left(\frac{d}{dt}\right)y \\
 & & \ -\frac{85}{74088}\frac{1}{t^2(t-1)}y.
\end{eqnarray*}
The solutions to this equations are of the form $$y_0(t)=(t-1)^{1/2}t^{2/3}x_0(t)$$ where $x_0$ is a solution to Hurwitz's equation. In particular we have the system
 \begin{eqnarray}
P_4^9P_6(y_1,y_2,y_3) & = & 0 \label{KleinII1}\\
P_6^7(y_1,y_2,y_3) & = & 1 \label{KleinII2}\\
P_{14}^3(y_1,y_2,y_3) & = & t \label{KleinII3}
\end{eqnarray}
and therefore the associated first order differential equations given the solutions $y_i(t)=(t-1)^{1/2}t^{2/3}x_i(t)$, for $i=1,2,3$, the degree $42$ and the invariants $P_4^9P_6$, $P_6^7$, $P_{14}^3$ is a dynamical system.
\end{exa}

\section{Differential dynamical systems invariant under finite linear algebraic group actions}

In the previous section we showed how, starting with an algebraic irreducible linear ordinary differential equation, we can construct a first order ordinary differential equation having as solutions a full system of solutions to the original equation. Moreover, if the resulting ordinary differential equation is a dynamical system, then the original equation is a standard equation.

In this section we will characterize the differential dynamical systems associated to standard equations. Furthermore, we will show that every such dynamical system will give rise to a family of standard equations, corresponding to the different orbits.

For the rest of this section we will assume that $G\subset GL_n(\mathbb{C})$ is a finite group.

\subsection{$G$-homogeneous dynamical systems}

\begin{defn}\label{defGhomo}
Let $G\subset GL_n(\mathbb{C})$ be a finite group and
\begin{equation}\label{dynsys}
\frac{d}{dt}\mathbf{X}=F(\mathbf{X})
\end{equation}
a dynamical system such that every coordinate function of  $F=(F_1,\ldots,F_n)$ is rational, i.e. $F_i\in\mathbb{C}(X_1,\ldots,X_n)$, for $i=1,\ldots,n$. We say that this dynamical system is $G$-homogeneous of degree $\Lambda\in\mathbb{Z}_{>0}$ if there exist $n$ homogeneous polynomials $P_1,\ldots,P_n\in\mathbb{C}[X_1,\ldots,X_n]^G$, algebraically independent over $\mathbb{C}$, and of degree $\Lambda$ such that if $\mathbf{X}(t)$ is a solution of the system, then
$$ \frac{d}{dt}P_i(\mathbf{X})(t)=\alpha_i\in\mathbb{C}, \ i=1,\ldots,n.$$
\end{defn}

Note that in this case
$$ F(\mathbf{X})=\left\{\left[\begin{array}{ccc}
\partial P_1/\partial X_1 & \ldots & \partial P_1/\partial X_n\\
\vdots & \ddots & \vdots\\
\partial P_n/\partial X_1 & \ldots & \partial P_n/\partial X_n
\end{array}\right](\mathbf{X})\right\}^{-1}
\left[\begin{array}{c} \alpha_1\\ \vdots\\ \alpha_n\end{array}\right].$$

We construct a $G$-homogeneous dynamical system in the next example.

\begin{exa}\label{KleinIII}
The differential Galois group of Hurwitz's equation in Example \ref{Klein} is Klein's simple group of order $168$, which is isomorphic to the subgroup $G$ of $SL_3(\mathbb{C})$ generated by the matrices \cite{SINGER1993}
\[
\left[\begin{array}{ccc}
\beta & 0 & 0\\
0 & \beta^2 & 0\\
0 & 0 & \beta^4
\end{array}\right],\quad
\left[\begin{array}{ccc}
0 & 1 & 0\\
0 & 0 & 1\\
1 & 0  & 0
\end{array}\right],\textrm{ and }
\left[\begin{array}{ccc}
a  & b & c\\
b & c & a\\
c & a  & b
\end{array}\right],
\]
where $\beta$ is a primitive $7$th root of unity, i.e. $\beta^6+\beta^5+\beta^4+\beta^3+\beta^2+\beta^1+1=0$, $a=\beta^4-\beta^3$, $b=\beta^2-\beta^5$ and $c=\beta-\beta^6$. Three algebraically independent invariants in $\mathbb{C}[X_1,X_2,X_3]$ are \cite{KATO2004}
\begin{eqnarray*}
P_4 & = & X_1^3X_2+X_2^3X_3+X_3^3X_1,\\
P_6 & = & \frac{1}{54}\det\left[\begin{array}{ccc}
\partial^2 F_4/\partial X_1\partial X_1 & \partial^2 F_4/\partial X_1\partial X_2 & \partial^2 F_4/\partial X_1\partial X_3 \\
\partial^2 F_4/\partial X_2\partial X_1 & \partial^2 F_4/\partial X_2\partial X_2 & \partial^2 F_4/\partial X_2\partial X_3 \\
\partial^2 F_4/\partial X_3\partial X_1 & \partial^2 F_4/\partial X_3\partial X_2 & \partial^2 F_4/\partial X_3\partial X_3 \\
\end{array}\right],\\
P_{14} & =  & \frac{1}{9}\det\left[\begin{array}{cccc}
\partial^2 F_4/\partial X_1\partial X_1 & \partial^2 F_4/\partial X_1\partial X_2 & \partial^2 F_4/\partial X_1\partial X_3 & \partial F_6/\partial X_1\\
\partial^2 F_4/\partial X_2\partial X_1 & \partial^2 F_4/\partial X_2\partial X_2 & \partial^2 F_4/\partial X_2\partial X_3 & \partial F_6/\partial X_2\\
\partial^2 F_4/\partial X_3\partial X_1 & \partial^2 F_4/\partial X_3\partial X_2 & \partial^2 F_4/\partial X_3\partial X_3 & \partial F_6/\partial X_3\\
\partial F_6/\partial X_1 & \partial F_6/\partial X_2 & \partial F_6/\partial X_3 & 0
\end{array}\right].
\end{eqnarray*}
Based on equations (\ref{KleinII1}), (\ref{KleinII2}), (\ref{KleinII3}) from Example \ref{KleinII},  we set
 \begin{eqnarray*}
\frac{d}{dt}P_4^9P_6(X_1,X_2,X_3) & = & 0\\
\frac{d}{dt}P_6^7(X_1,X_2,X_3) & = & 0\\
\frac{d}{dt}P_{14}^3(X_1,X_2,X_3) & = & 1
\end{eqnarray*}
to obtain the $G$-homogeneous dynamical system of degree $42$
\begin{eqnarray*}
\frac{d}{dt} X_1 & = & \frac{1}{\Delta}\left(\frac{\partial P_4^9P_6}{\partial X_2}\frac{\partial P_6^7}{\partial X_3}-\frac{\partial P_4^9P_6}{\partial X_3}\frac{\partial P_6^7}{\partial X_2}\right)(X_1,X_2,X_3)\\
\frac{d}{dt} X_2 & = & \frac{1}{\Delta}\left(\frac{\partial P_4^9P_6}{\partial X_1}\frac{\partial P_6^7}{\partial X_3}-\frac{\partial P_4^9P_6}{\partial X_3}\frac{\partial P_6^7}{\partial X_1}\right)(X_1,X_2,X_3)\\
\frac{d}{dt} X_1 & = & \frac{1}{\Delta}\left(\frac{\partial P_4^9P_6}{\partial X_1}\frac{\partial P_6^7}{\partial X_2}-\frac{\partial P_4^9P_6}{\partial X_2}\frac{\partial P_6^7}{\partial X_1}\right)(X_1,X_2,X_3)
\end{eqnarray*}
where
\begin{eqnarray*}
\Delta & = & \det\left[\begin{array}{ccc}
\partial P_4^9P_6/\partial X_1 & \partial P_4^9P_6/\partial X_2 & \partial P_4^9P_6/\partial X_3 \\
\partial P_6^7/\partial X_1 & \partial P_6^7/\partial X_2 & \partial P_6^7/\partial X_3 \\
\partial P_{14}^3/\partial X_1 & \partial P_{14}^3/\partial X_2 & \partial P_{14}^3/\partial X_3
\end{array}\right].
\end{eqnarray*}
\end{exa}

\subsection{Linear ordinary differential equations for orbits of $G$-homogeneous dynamical system}

In this section we will show how the coordinates $x_1,\ldots,x_n$ of each solution $\mathbf{X}(t)$ to a $G$-homogeneous dynamical system form a full system of solutions to an algebraic linear ordinary differential equation.

Let $P_1,\ldots,P_n\in\mathbb{C}[X_1,\ldots,X_n]^G$ be algebraically independent over $\mathbb{C}$ and $\alpha_1,\ldots,\alpha_n\in\mathbb{C}$. Then, for any $\beta_1,\ldots,\beta_n\in\mathbb{C}$, the $\mathbb{C}$-morphism
\begin{eqnarray*}
\mathbb{C}[P_1,\ldots,P_n] &\longrightarrow & \mathbb{C}(t)\\
P_i & \longmapsto & \alpha_i t+\beta_i,\ i=1,\ldots,n
\end{eqnarray*}
can be extended to a morphism
\begin{eqnarray}\label{extension}
\Phi: \mathbb{C}[X_1,\ldots,X_n]^G &\longrightarrow & \overline{\mathbb{C}(t)}.
\end{eqnarray}

Now let $\frac{d}{dt}\mathbf{X}=F(\mathbf{X})$ be a $G$-homogeneous dynamical system of degree $\Lambda$, and let $P_1,\ldots,P_n$ and $\alpha_1,\ldots,\alpha_n$ be as in Definition \ref{defGhomo}. Let us fix a solution $\mathbf{X}(t)=(x_1,\ldots,x_n)$ to the dynamical system. Then there exist $\beta_1,\ldots,\beta_n\in\mathbb{C}$ such that $$P_i(x_1,\ldots,x_n)=\alpha_i t+\beta_i, \quad i=1,\ldots,n,$$
hence
$$\frac{d}{dt}\left[\begin{array}{c} x_1\\ \vdots\\ x_n\end{array}\right]=\left\{\left[\begin{array}{ccc}
\partial P_1/\partial X_1 & \ldots & \partial P_1/\partial X_n\\
\vdots & \ddots & \vdots\\
\partial P_n/\partial X_1 & \ldots & \partial P_n/\partial X_n
\end{array}\right](x_1,\ldots,x_n)\right\}^{-1}
\left[\begin{array}{c} \alpha_1\\ \vdots\\ \alpha_n\end{array}\right].$$
Differentiating with respect to $t$ we obtain
{\footnotesize
\begin{eqnarray*}
\left(\frac{d}{dt}\right)^2\left[\begin{array}{c} x_1\\ \vdots\\ x_n\end{array}\right] & = & D\left(\left\{\left[\begin{array}{ccc}
\partial P_1/\partial X_1 & \ldots & \partial P_1/\partial X_n\\
\vdots & \ddots & \vdots\\
\partial P_n/\partial X_1 & \ldots & \partial P_n/\partial X_n
\end{array}\right]\right\}^{-1}
\left[\begin{array}{c} \alpha_1\\ \vdots\\ \alpha_n\end{array}\right]\right)(x_1,\ldots,x_n) \cdot\\
&  &\quad \frac{d}{dt}\left[\begin{array}{c} x_1\\ \vdots\\ x_n\end{array}\right]\\
& = & D\left(\left\{\left[\begin{array}{ccc}
\partial P_1/\partial X_1 & \ldots & \partial P_1/\partial X_n\\
\vdots & \ddots & \vdots\\
\partial P_n/\partial X_1 & \ldots & \partial P_n/\partial X_n
\end{array}\right]\right\}^{-1}
\left[\begin{array}{c} \alpha_1\\ \vdots\\ \alpha_n\end{array}\right]\right)(x_1,\ldots,x_n) \cdot\\
& &\quad  \left\{\left[\begin{array}{ccc}
\partial P_1/\partial X_1 & \ldots & \partial P_1/\partial X_n\\
\vdots & \ddots & \vdots\\
\partial P_n/\partial X_1 & \ldots & \partial P_n/\partial X_n
\end{array}\right](x_1,\ldots,x_n)\right\}^{-1}
\left[\begin{array}{c} \alpha_1\\ \vdots\\ \alpha_n\end{array}\right],
\end{eqnarray*}}
where $D$ is the total derivative with respect to $(X_1,\ldots,X_n)$.

Repeating this process we obtain rational expressions for $\left(\frac{d}{dt}\right)^{i}x_j$, for $i=0,1,\ldots,n$ and $j=1,\ldots,n$ in terms of $x_1,\ldots,x_n$. Hence for every value of $t$ where the solution $\mathbf{X}(t)=(x_1,\ldots,x_n)$ is defined we have $n+1$ vector in $\mathbb{C}^n$
\begin{equation*}\label{diffvecs}
\left(\frac{d}{dt}\right)^n\left[\begin{array}{c} x_1\\ \vdots\\ x_n\end{array}\right],\quad \ldots \ ,\quad \frac{d}{dt}\left[\begin{array}{c} x_1\\ \vdots\\ x_n\end{array}\right], \quad
\left[\begin{array}{c} x_1\\ \vdots\\ x_n\end{array}\right].
\end{equation*}
To find the linear dependence relation between them, we rewrite their coordinates in terms of $(x_1,\ldots,x_n)$ by using the expressions we obtained through iterated differentiation with respect to $t$. As the coefficients in this linear system are in the field $\mathbb{C}(x_1,\ldots,x_n)$ so is their solution. Hence there exist $Q_n,\ldots,Q_1,Q_0\in\mathbb{C}(x_1,\ldots,x_n)$ such that
\begin{equation}\label{lindepdiffvecs}
0=Q_n\left\{\left(\frac{d}{dt}\right)^n\left[\begin{array}{c} x_1\\ \vdots\\ x_n\end{array}\right]\right\}+\ldots+
Q_1\left\{\frac{d}{dt}\left[\begin{array}{c} x_1\\ \vdots\\ x_n\end{array}\right]\right\}+Q_0
\left\{\left[\begin{array}{c} x_1\\ \vdots\\ x_n\end{array}\right]\right\}.
\end{equation}

Now, since each $P_1,\ldots,P_n$ is $G$-invariant, and the action of $G$ as well as differentiation are $\mathbb{C}$-linear, the $G$-action permutes the solutions of the $G$-homogeneous dynamical system. Therefore, the linear dependence relation in (\ref{lindepdiffvecs}) is $G$-invariant and $Q_n,\ldots,Q_1,Q_0\in\mathbb{C}(x_1,\ldots,x_n)^G$.

The evaluations $P_i(x_1,\ldots,x_n)=\alpha_i t+\beta_i$, for  $i=1,\ldots,n$ can be extended via the $\mathbb{C}$-morphism $\Phi$ in (\ref{extension}) to the field $\mathbb{C}(x_1,\ldots,x_n)^G$. Therefore, we obtain $c_n,\ldots,c_1,c_0\in\overline{\mathbb{C}(t)}$ such that
$c_i=Q_i(x_1,\ldots,x_n)$, for $i=1,\ldots,n$ and
$$c_n\left\{\left(\frac{d}{dt}\right)^n\left[\begin{array}{c} x_1\\ \vdots\\ x_n\end{array}\right]\right\}+\ldots+
c_1\left\{\frac{d}{dt}\left[\begin{array}{c} x_1\\ \vdots\\ x_n\end{array}\right]\right\}+c_0
\left\{\left[\begin{array}{c} x_1\\ \vdots\\ x_n\end{array}\right]\right\}=0.$$
Hence the coordinates of $\mathbf{X}(t)=(x_1,\ldots,x_n)$ are solutions to
$$c_n\left(\frac{d}{dt}\right)^n x+c_{n-1}\left(\frac{d}{dt}\right)^{n-1}x+\ldots+c_1\frac{d}{dt}x+c_0x=0.$$

We illustrate the evaluation map in (\ref{extension}) in the following example.

\begin{exa}
We continue with Example \ref{KleinIII}. In it we obtained a $G$-homogeneous dynamical system of degree $42$. Let us consider the solution $\mathbf{Y}(t)=(y_1,y_2,y_3)$ with initial conditions
$$P_4^9P_6(\mathbf{Y})(0)=0,\quad P_6^7(\mathbf{Y})(0)=\frac{1}{1728},\quad P_{14}^3(\mathbf{Y})(0)=0.$$
Hence
$$P_4^9P_6(\mathbf{Y})(t)=0,\quad P_6^7(\mathbf{Y})(t)=\frac{1}{1728},\quad P_{14}^3(\mathbf{Y})(t)=t$$
and
$$P_4(\mathbf{Y})(t)=0,\quad P_6(\mathbf{Y})(t)=1728^{-1/7},\quad P_{14}(\mathbf{Y})(t)=t^{1/3}.$$
Therefore the linear ordinary differential equation satisfied by $y_1$, $y_2$, and $y_3$ is the equation in Example \ref{KleinII}.
\end{exa}

We now prove that the coordinates of the solutions to $G$-homogeneous dynamical systems are algebraic by showing that the differential Galois group of the equation is finite.

\begin{thm}\label{thmII}
Let $G\subset GL_n(\mathbb{C})$ be a finite group and $\frac{d}{dt}\mathbf{X}=F(\mathbf{X})$ a $G$-homogeneous dynamical system. Let $P_1,\ldots,P_n\in\mathbb{C}[X_1,\ldots,X_n]^G$  and let $\alpha_1,\ldots,\alpha_n\in\mathbb{C}$ be as in Definition \ref{defGhomo}. Let $\mathbf{X}(t)=(x_1,\ldots, x_n)$ be a solution to this $G$-homogeneous dynamical system and let $\beta_1,\ldots,\beta_n\in\mathbb{C}$ be such that $$P_i(x_1,\ldots,x_n)=\alpha_i t+\beta_i, \quad i=1,\ldots,n.$$ Let $K_0\subseteq\overline{\mathbb{C}(t)}$ be the field generated by the image of the extension $\Phi$ in (\ref{extension}). Let
 $$c_n\left(\frac{d}{dt}\right)^n x+c_{n-1}\left(\frac{d}{dt}\right)^{n-1}x+\ldots+c_1\frac{d}{dt}x+c_0x=0$$
be the linear ordinary differential equation satisfied by $x_1,\ldots,x_n$, and let $G_1\in GL_n(\mathbb{C})$ be the representation of the differential Galois group of this equation over $K_0$ given by the solutions $x_1,\ldots,x_n$. Then, if $x_1,\ldots,x_n$ are linearly independent over $\mathbb{C}$, we have $G_1=G$.  
\end{thm}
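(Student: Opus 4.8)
The plan is to show the two inclusions $G_1\subseteq G$ and $G\subseteq G_1$ by exploiting the $G$-invariance already built into the construction together with a dimension count via Compoint's theorem. For the inclusion $G_1\subseteq G$: every element $\sigma$ of the differential Galois group $G_1$ acts on the solution space by a matrix in $GL_n(\mathbb{C})$, and since $\sigma$ fixes $K_0$, which contains all the $P_i(x_1,\dots,x_n)=\alpha_i t+\beta_i$, it must permute the solutions of the $G$-homogeneous dynamical system $\frac{d}{dt}\mathbf{X}=F(\mathbf{X})$ while fixing the values of $P_1,\dots,P_n$ along them. More precisely, $\sigma(\mathbf{X})$ is again a solution with $P_i(\sigma(\mathbf{X}))=\sigma(P_i(\mathbf{X}))=\alpha_i t+\beta_i$, so $\sigma(\mathbf{X})$ and $\mathbf{X}$ lie over the same point of $\mathbb{C}^n/G$ for every $t$; since the generic fiber of $\Pi_G$ is a single $G$-orbit, $\sigma$ agrees with an element of $G$ on the (Zariski-dense) image of $\mathbf{X}$, hence $\sigma\in G$ as a linear transformation. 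I would phrase this carefully using that the $P_i$ generate the subring whose vanishing locus cuts out the $G$-orbit, or equivalently appeal to the first-integral argument already implicit in Section 1.3.

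For the reverse inclusion $G\subseteq G_1$, I would argue by comparing sizes. Since $x_1,\dots,x_n$ are linearly independent over $\mathbb{C}$ by hypothesis, the field $K:=K_0(x_1,\dots,x_n)$ is a Picard–Vessiot extension of $K_0$ for the equation, and its Galois group is $G_1$. The key point is that $[K:K_0]=|G_1|$, while on the other hand $K$ contains $K_0(x_1,\dots,x_n)=K_0\bigl(\mathbb{C}(x_1,\dots,x_n)\bigr)$, and the subfield fixed by $G$ inside $\mathbb{C}(x_1,\dots,x_n)$ is $\mathbb{C}(x_1,\dots,x_n)^G$, whose generators' values lie in $K_0$ by the extension $\Phi$ of (\ref{extension}). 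Therefore $K$ is generated over $K_0$ by elements each of whose $G$-conjugates it contains, and whose $G$-invariant combinations lie in $K_0$; this forces $[K:K_0]\le |G|$, hence $|G_1|\le|G|$. Combined with $G_1\subseteq G$ from the first part, this yields $G_1=G$.

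The main obstacle I anticipate is the bookkeeping around the base field $K_0$ and the need to know that the extension $\Phi$ from (\ref{extension}) really does send \emph{all} of $\mathbb{C}(x_1,\dots,x_n)^G$ into $K_0$ (not merely $\mathbb{C}[X_1,\dots,X_n]^G$), so that $K/K_0$ is Galois with group contained in $G$ in the na\"ive "finite field extension" sense; this is where algebraic independence of $P_1,\dots,P_n$ and finiteness of $G$ (so that $\mathbb{C}(X_1,\dots,X_n)^G$ is a finite algebraic extension of $\mathbb{C}(P_1,\dots,P_n)$, by Noether's theorem on invariants of finite groups) are both essential. Once that is in place, the identification of the differential Galois group with the ordinary Galois group $\mathrm{Gal}(K/K_0)$ and the two inclusions above are comparatively formal. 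I would also need the linear-independence hypothesis precisely to guarantee $\dim_K(\text{solution space})=n$, so that $G_1$ is genuinely a subgroup of $GL_n(\mathbb{C})$ and the comparison with $G\subset GL_n(\mathbb{C})$ takes place inside the same ambient group.
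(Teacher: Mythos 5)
Your first inclusion $G_1\subseteq G$ is essentially sound and close in spirit to the paper's argument: an element $\sigma\in G_1$ fixes the $K_0$-values of \emph{all} generators of $\mathbb{C}[X_1,\ldots,X_n]^G$ (not just $P_1,\ldots,P_n$ --- as you note, the $n$ chosen invariants alone need not separate $G$-orbits), so $\sigma(\mathbf{X})$ and $\mathbf{X}$ lie in the same $G$-orbit pointwise; finiteness of $G$ plus irreducibility of the solution curve then yields a single $g\in G$ with $\sigma(\mathbf{X})=g\mathbf{X}$, and linear independence of $x_1,\ldots,x_n$ identifies $\sigma$ with $g$ inside $GL_n(\mathbb{C})$.

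The second half, however, has a genuine logical gap. You establish $[K:K_0]=|G_1|$ and then argue for the \emph{upper} bound $[K:K_0]\le|G|$, concluding ``$|G_1|\le|G|$, combined with $G_1\subseteq G$ this yields $G_1=G$.'' But $G_1\subseteq G$ already implies $|G_1|\le|G|$; two bounds in the same direction cannot force equality (both are consistent with $G_1$ being trivial). What is actually needed is the reverse inclusion $G\subseteq G_1$, equivalently the lower bound $[K:K_0]\ge|G|$. For that you must show that every $g\in G$ genuinely induces a differential $K_0$-automorphism of $K=K_0(x_1,\ldots,x_n)$, i.e.\ that the ideal of relations of $x_1,\ldots,x_n$ over $K_0$ is $G$-stable and that $G$ acts faithfully on $K$ (this is precisely where the hypothesis that $x_1,\ldots,x_n$ are linearly independent enters). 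The paper's proof sidesteps the count entirely: since the derivatives $\left(\frac{d}{dt}\right)^{i-1}x_j$ are rational in $x_1,\ldots,x_n$, every $G$-invariant of the full Picard--Vessiot ring evaluates, via the extension $\Phi$, into $K_0$, whence $K^{G}=K_0=K^{G_1}$; the Galois correspondence (Artin's theorem for the finite group $G$ acting on $K$) then gives $G=\mathrm{Gal}(K/K_0)=G_1$ in one stroke. Your proposal needs this ``$G$ acts on $K$ with fixed field exactly $K_0$'' step --- or some other construction of $|G|$ distinct differential $K_0$-embeddings of $K$ --- to close the gap.
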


\begin{proof}
Under the evaluation $K_0$-morphism
\begin{eqnarray*}
\Psi: K_0\Big[X_{ij}|\ i,j=1,\ldots,n\Big]\left[\frac{1}{\det(X_{ij})}\right] & \longrightarrow & K\\
X_{ij} & \longmapsto & \frac{d^{i-1}}{dt^{i-1}}x_j,
\end{eqnarray*}
where
$$K=K_0\left(\left(\frac{d}{dt}\right)^{i-1} x_j \Big|\ i,j=1,\ldots,n\right),$$
every $G$-invariant polynomial is sent to an element in $K_0$. Therefore $K^G= K_0=K^{G_1}$. Using the Galois correspondence we obtain $G_1=G$.
\end{proof}

In the context of Theorem \ref{thmII}, the following example shows that the representation of the Galois group over $\mathbb{C}(t)$ given by the solutions $x_1,\ldots,x_n$ may contain $G$ as a proper subgroup.

\begin{exa}
Consider the group $A_4^{SL_2}$, isomorphic to the subgroup $G$ of $SL_2(\mathbb{C})$  generated by \cite{SINGER1993}
\[
\left[\begin{array}{cc}
i & 0 \\
0 & -i
\end{array}\right]\textrm{ and }
\left[\begin{array}{cc}
\frac{i-1}{2} & \frac{i-1}{2}\\
\frac{i+1}{2} & -\frac{i+1}{2}
\end{array}\right].
\]
Two algebraically independent elements of $\mathbb{C}[X_1,X_2]^G$ are
\begin{eqnarray*}
P_6 & = & X_1^5X_2-X_1X_2^5,\\
P_8 & = & X_1^8+14X_1^4X_2^4+X_2^8.
\end{eqnarray*}
The $G$-homogeneous dynamical system of degree $24$ defined by the relations
\begin{eqnarray*}
\frac{d}{dt}P_6^4(X_1,X_2) & = &  1\\
\frac{d}{dt}P_8^3(X_1,X_2) & = & 0
\end{eqnarray*}
will yield the linear ordinary differential equation
$$ \left(\frac{d}{dt}\right)^2 x+\frac{1}{4}\frac{5t-3}{t(t-1)}\frac{d}{dt}x-\frac{7}{576}\frac{1}{t(t-1)}x=0 $$
corresponding to the solution $\mathbf{X}(t)=(x_1,x_2)$ such that
\begin{eqnarray*}
P_6^4(x_1,x_2) & = &  t\\
P_8^3(x_1,x_2) & = & 108.
\end{eqnarray*}
We have $$K_0=\mathbb{C}(P_6,P_8)=\mathbb{C}(t^{1/4}).$$ From Theorem \ref{thmII}, the differential Galois group of the equation over $K_0$ is $G$. Now, from the tower $$\mathbb{C}(t)\subset\mathbb{C}(t^{1/4})\subset\mathbb{C}(x_1,x_2)$$ we obtain that the differential Galois group over $\mathbb{C}(t)$ is an extension of $G$ by the non-cyclic group of order $4$.
\end{exa}

\section{Example: Hesse pencil}

In this section, let $G\subset SL_3(\mathbb{C})$ be the group of order $27$ generated by the matrices \cite{ROTILLON1981}
\[
\left[\begin{array}{ccc}
1 & 0 & 0\\
0 & \omega & 0\\
0 & 0 & \omega^2
\end{array}\right]\textrm{ and }
\left[\begin{array}{ccc}
0 & 1 & 0\\
0 & 0 & 1\\
1 & 0 & 0
\end{array}\right],
\]
where $\omega$ is a primitive $3$rd root of unity, i.e. $\omega^2+\omega+1=0$. Three algebraically independent invariants in $\mathbb{C}[X_1,X_2,X_3]$ are
\begin{eqnarray*}
P_3 & = & X_1^3+X_2^3+X_3^3,\\
Q_3 & = & X_1X_2X_3,\\
P_6 & = & X_1^3X_2^3+X_2^3X_3^3+X_3^3X_1^3.
\end{eqnarray*}
Hesse pencil is the collection of $G$-invariant cubic curves
$$C(\mu:\lambda)=\left\{(X_1:X_2:X_3)\in\mathbb{P}^2(\mathbb{C})\Big|\ (\mu P_3+\lambda Q_3)(X_1:X_2:X_3)=0\right\}.$$
We have the generic case $C(1:\lambda)$ that is an elliptic curve with j-invariant 
$$j(1:\lambda)=-\lambda^3\frac{(\lambda^3-216)^3}{(\lambda^3+27)^3}$$
if $\lambda\ne -3,-3e^{2\pi i/3},-3e^{-2\pi i/3}$, and the singular cases $C(1:-3)$, $C(1:-3e^{2\pi i/3})$ and $C(1:-3e^{-2\pi i/3})$ that are the union of a line and a rational curve, and $C(0:1)$ that is the union of three lines.

We will apply the methods from this paper to obtain linear ordinary differential equations whose associated Schwarz maps $[\mathbf{X}_{\lambda}](t)$ parametrizes $C(1:\lambda)$. Let us consider the $G$-homogeneous dynamical system of degree $6$ defined by the equations
\begin{eqnarray}
P_3Q_3(x_{\lambda,1},x_{\lambda,2},x_{\lambda,3}) & = & -\lambda \label{Hesse1}\\
P_6(x_{\lambda,1},x_{\lambda,2},x_{\lambda,3}) & = & t \label{Hesse2}\\
Q_3^2(x_{\lambda,1},x_{\lambda,2},x_{\lambda,3}) & = & 1. \label{Hesse3}
\end{eqnarray}
It yields the family of algebraic linear differential equations
\begin{eqnarray*}
0 & = & \left(\frac{d}{dt}\right)^3x+\frac{3(6t^2-\lambda^2t+9\lambda)}{4t^3-\lambda^2t^2+18\lambda t-4\lambda^3+27}\left(\frac{d}{dt}\right)^2x\\
 &  &\ \ +\ \frac{8}{9}\ \frac{12t-\lambda^2}{4t^3-\lambda^2t^2+18\lambda t-4\lambda^3+27}\left(\frac{d}{dt}\right)x\\
 & &\ \ \ \ -\ \frac{8}{27}\ \frac{1}{4t^3-\lambda^2t^2+18\lambda t-4\lambda^3+27}x.
\end{eqnarray*}
The generic equation has four singularities. The discriminant of the denominator $4t^3-\lambda^2t^2+18\lambda t-4\lambda^3+27$ is $-16(\lambda^3+27)$. In the three singular cases, the discriminant vanishes, the equation has three singularities and the equation factors as the product of one of second order and one of first order.

From equations (\ref{Hesse1}), (\ref{Hesse2}), (\ref{Hesse3}) we can obtain two different $G$-homogeneous dynamical systems of degree $6$. The first one differentiating the equations with respect to $t$
\begin{eqnarray*}
\frac{d}{dt} P_3Q_3(X_1,X_2,X_3) & = & 0\\
\frac{d}{dt} P_6(X_1,X_2,X_3) & = & 1\\
\frac{d}{dt} Q_3^2(X_1,X_2,X_3) & = & 0
\end{eqnarray*}
and the other one with respect to $\lambda$
\begin{eqnarray*}
\frac{d}{d\lambda} P_3Q_3(X_1,X_2,X_3) & = &-1\\
\frac{d}{d\lambda} P_6(X_1,X_2,X_3) & = & 0\\
\frac{d}{d\lambda} Q_3^2(X_1,X_2,X_3) & = & 0.
\end{eqnarray*}
With these two $G$-homogeneous dynamical system we can study the family of Schwarz maps $[\mathbf{X}_{\lambda}](t)$ in terms of the deformation parameter $\lambda$. Note that the two vector fields defining the two $G$-homogeneous dynamical systems commute, therefore we can define $$\mathbf{X}_t(\lambda)=\mathbf{X}(\lambda,t)=\mathbf{X}_{\lambda}(t)$$ and $x_{t,i}(\lambda)=x_i(\lambda,t)=x_{\lambda,i}(t)$, for $i=1,2,3$.
The map $[\mathbf{X}_t](\lambda)$ is the Schwarz map associated to the equation
\begin{eqnarray*}
0 & = & \left(\frac{d}{d\lambda}\right)^3x+
3\frac{\lambda t^5+12\lambda^2 t^3+42\lambda^3 t+3t^4+27\lambda t^2+324\lambda^2-405t}{(t^3+9\lambda t+54)(\lambda^2t^2+4\lambda^3-4t^3-18\lambda t-27)}\left(\frac{d}{d\lambda}\right)^2x\\
 & & \ \ +\frac{2}{9}\frac{4t^5+39\lambda t^3+216\lambda^2 t+297t^2+2106\lambda}{(t^3+9\lambda t+54)(\lambda^2 t^2+4\lambda^3-4t^3-18\lambda t-27)}\left(\frac{d}{d\lambda}\right)x\\
 & & \ \ \ \ +\frac{4}{27}\frac{2t^3-9\lambda t-135}{(t^3+9\lambda t+54)(\lambda^2 t^2+4\lambda^3-4t^3-18\lambda t-27)}x.
\end{eqnarray*}
The generic equation has five singularities ($\lambda=-(t^3+54)/9t$ is apparent), and the Fano curve parametrized by $[\mathbf{X}_t](\lambda)$ is the elliptic curve $$X_1^3X_2^3+X_2^3X_3^3+X_3^3X_1^3-tX_1^2X_2^2X_3^3=0$$ with j-invariant $$j(1:-t)=t^3\frac{(t^3+216)^3}{(t^3-27)^3},$$ for $t\ne 3,3e^{2\pi i/3},3e^{-2\pi i/3}$. The discriminant of the denominator $(t^3+9\lambda t+54)(\lambda^2 t^2+4\lambda^3-4t^3-18\lambda t-27)$ is $16(5t^3-864)^2(t^3-27)^7$. As before, in the cases $t=3,3e^{2\pi i/3},3e^{-2\pi i/3}$ the equation factors. In the case when $5t^3-864=0$, the apparent singularity and a root of $\lambda^2 t^2+4\lambda^3-4t^3-18\lambda t-27$ are confluent singularities.

\section{Example: Fricke degree $12$ pencil}

In this section, let $G\subset SL_3(\mathbb{C})$ be the isomorphic representation of Klein's simple group of order $168$ generated by the matrices in Example \ref{Klein} and $P_4$, $P_6$ and $P_{14}$ the three algebraically independent $G$-invariant polynomials in $\mathbb{C}[X_1,X_2,X_3]$.

Fricke degree $12$ pencil is the collection of $G$-invariant curves of degree $12$ in $\mathbb{P}^2(\mathbb{C})$
$$C_{12}(\nu:\mu)=\left\{(X_1:X_2:X_3)\in\mathbb{P}^2(\mathbb{C})\Big|\ (\nu P^2_6+\mu P^3_4)(X_1:X_2:X_3)=0\right\}.$$
We have the generic case $C_{12}(1:\mu)$ that has genus $31$ if $\mu\ne 0,-\frac{4}{27},4$, and the singular cases $C_{12}(1:-\frac{4}{27})$ with genus $3$, $C_{12}(1:4)$ and $C_{12}(1:0)$ with genus $10$ and Klein's quartic $C_{12}(0:1)$ of genus $3$.

Kato in \cite{KATO2004} produced linear ordinary differential equations whose associated Schwarz maps parametrizes $C_{12}(1:\mu)$, $C_{12}(1:-\frac{4}{27})$ and $C_{12}(1:4)$.

\begin{exa}\label{KatoI}
The Schwarz map associated to a full system of solutions to the linear ordinary differential equation
\begin{eqnarray*}
0 & = & \left(\frac{d}{dt}\right)^3x+\frac{1}{2}\frac{7t-4}{t(t-1)}\left(\frac{d}{dt}\right)^2x+\frac{1}{252}\frac{387t-56}{t^2(t-1)}\left(\frac{d}{dt}\right)x \\
 & & \ -\frac{15}{2744}\frac{1}{t^2(t-1)}x
\end{eqnarray*}
parametrizes $C_{12}(1:-\frac{4}{27})$. In particular we can choose solutions $x_1,x_2,x_3$ such that
\begin{eqnarray*}
\frac{-3P_{14}+8P_4^2P_6}{288P_4^2P_6}(x_1,x_2,x_3)& = & t
\end{eqnarray*}
and therefore the equation is standard. Note that by definition of $C_{12}(1:-\frac{4}{27})$ we have
\begin{eqnarray*}
 P_6^2-\frac{4}{27}P_4^3(x_1,x_2,x_3)& = & 0.
\end{eqnarray*} 
\end{exa}
 
\begin{exa}\label{KatoII}
The Schwarz map associated to a full system of solutions to the linear ordinary differential equation
\begin{eqnarray*}
0 & = & \left(\frac{d}{dt}\right)^3x+\frac{1}{2}\frac{7t-4}{t(t-1)}\left(\frac{d}{dt}\right)^2x+\frac{1}{112}\frac{172t-21}{t^2(t-1)}\left(\frac{d}{dt}\right)x \\
 & & \ -\frac{15}{2744}\frac{1}{t^2(t-1)}x
\end{eqnarray*}
parametrizes $C_{12}(1:4)$. As in the previous example we can choose solutions $x_1,x_2,x_3$ such that
\begin{eqnarray*}
\frac{P_{14}+72P_4^2P_6}{128P_4^2P_6}(x_1,x_2,x_3)& = & t
\end{eqnarray*}
and therefore the equation is standard. Note that by definition of $C_{12}(1:4)$ we have
\begin{eqnarray*}
 P_6^2+4P_4^3(x_1,x_2,x_3)& = & 0.
\end{eqnarray*} 
\end{exa}

\begin{exa}\label{KatoIII}
The Schwarz map associated to a full system of solutions to the linear ordinary differential equation
\begin{eqnarray*}
0 & = & \left(\frac{d}{dt}\right)^3x+\left(\frac{3}{2}\frac{\partial}{\partial t}\ln P(\mu,t)-\frac{1}{t-t_0}\right)\left(\frac{d}{dt}\right)^2x\\
 & &\ +\left(\frac{43}{28}t+\frac{729\mu^2-3628\mu-640}{21\mu}\right.\\
 & &\quad \left.-\frac{(2187\mu^2-15004\mu-3200)(27\mu+4)(\mu-4)}{63\mu^2(t-t_0)}\right)\frac{1}{P(\mu,t)}\left(\frac{d}{dt}\right)x \\
 & &\qquad +\left(-\frac{15}{14^3}-\frac{5(27\mu+4)(\mu-4)}{196\mu(t-t_0)}\right)\frac{1}{P(\mu,t)}x,
\end{eqnarray*}
where $$t_0=-\frac{81\mu^2-432\mu-80}{3\mu}$$ and
$$P(\mu,t)  =  \frac{1}{4}\left(4t^3-g_2(\mu)t-g_3(\mu)\right),$$
with
$$g_2(\mu)  =  \frac{64}{3\mu}\left(189\mu^2+280\mu-48\right)$$
and
$$g_3(\mu)  =  -\frac{-256}{27\mu}\left(729\mu^3+12852\mu^2+1456\mu+4032\right)$$
parametrizes $C_{12}(1:\mu)$, for $\mu\ne 0,-\frac{4}{27},4$. We can choose solutions $x_1,x_2,x_3$ such that
\begin{eqnarray*}
\frac{-3P_{14}-88P_4^2P_6}{3P_4^2P_6}(x_1,x_2,x_3)& = & t
\end{eqnarray*}
and therefore the equation is standard. Note that by definition of $C_{12}(1:\mu)$ we have
\begin{eqnarray*}
 P_6^2+\mu P_4^3(x_1,x_2,x_3)& = & 0.
\end{eqnarray*} 
\end{exa}

The Schwarz maps associated to the equations in Examples \ref{KatoI}, \ref{KatoII} and \ref{KatoIII} correspond to different orbits $\mathbf{X}_{\mu}(t)=(x_{\mu,1},x_{\mu,2},x_{\mu,3})$ of a single $G$-homogeneous dynamical system of degree $18$ defined by the equations
\begin{eqnarray*}
P_6^3(x_{\mu,1},x_{\mu,2},x_{\mu,3}) & = & -\mu \\
P_{14}P_4(x_{\mu,1},x_{\mu,2},x_{\mu,3}) & = & 2\left((27\mu-44)t-9\mu\right) \\
P_4^3P_6(x_{\mu,1},x_{\mu,2},x_{\mu,3}) & = & 1.
\end{eqnarray*}
If $\mu\ne 0,44/27$, the curve parametrized by $\mathbf{X}_{\mu}(t)$ is $C_{12}(1:\mu)$, for $$(P_6^3-\mu P_4^3P_6) (x_{\mu,1},x_{\mu,2},x_{\mu,3})=P_6(P_6^2-\mu P_4^3) (x_{\mu,1},x_{\mu,2},x_{\mu,3})=0.$$
Now, when $\mu=-\frac{4}{27}$ and $\mu=4$ we obtain the same equations as in Examples \ref{KatoI} and \ref{KatoII}. But the generic case becomes a projectively equivalent equation
\begin{eqnarray*}
0 & = & \left(\frac{d}{dt}\right)^3x+\left(\frac{3}{2}\frac{\partial}{\partial t}\ln Q(\mu,t)-\frac{1}{t-\tau_0}\right)\left(\frac{d}{dt}\right)^2x\\
 & &\ +\left(\frac{43(27\mu-44)^2\mu}{551124}t-\frac{(2619\mu^2-9148\mu-1280)(27\mu-44)}{1653372}\right.\\
 & &\quad \left.-\frac{(27\mu+4)(\mu-4)(2187\mu^2-15004\mu-3200)}{4960116\mu(t-\tau_0)}\right)\frac{1}{Q(\mu,t)}\left(\frac{d}{dt}\right)x \\
 & &\qquad +\left(-\frac{5(27\mu-44)^2\mu}{18003384}-\frac{5(27\mu+4)(\mu-4)(27\mu-44)}{7715736(t-\tau_0)}\right)\frac{1}{Q(\mu,t)}x
\end{eqnarray*}
where $$\tau_0=\frac{5}{6}\frac{(27\mu+4)(\mu-4)}{\mu(27\mu-44)}$$ and
\begin{eqnarray*}
Q(\mu,t) & = &
   \frac{1}{19683}\left(h_0(\mu)t^3+h_1(\mu)t^2+h_2(\mu)t+h_3(\mu)\right); \textrm{ with }\\
h_0(\mu) & = & (27\mu-44)^2\mu,\\
h_1(\mu) & = & -(27\mu-44)^2\mu,\\
h_2(\mu) & = & (9\mu-4)(27\mu+4)(\mu-4),\textrm{ and }\\
g_3(\mu) & = & -(27\mu+4)(\mu-4)^2.
\end{eqnarray*}
With this unified viewpoint we can study the family of Schwarz maps $[\mathbf{X}_\mu](t)$ in terms of the deformation parameter $\mu$. Let us denote $$\mathbf{X}_t(\mu)=\mathbf{X}(\mu,t)=\mathbf{X}_{\mu}(t)$$
and $x_{t,i}(\mu)=x_i(\mu,t)=x_{\mu,i}(t)$, for $i=1,2,3$. So we have
\begin{eqnarray*}
\frac{d}{d\mu}P_6^3(x_{t,1},x_{t,2},x_{t,3}) & = & -1 \\
\frac{d}{d\mu}P_{14}P_4(x_{t,1},x_{t,2},x_{t,3}) & = & 2(27t-9) \\
\frac{d}{d\mu}P_4^3P_6(x_{t,1},x_{t,2},x_{t,3}) & = & 0
\end{eqnarray*}
and a linear differential equation of the form
\begin{eqnarray*}
0 & = & \left(\frac{d}{d\mu}\right)^3x+\frac{1}{\mu}\frac{P_2(t,\mu)}{\Delta(t,\mu)}\left(\frac{d}{d\mu}\right)^2x+\frac{1}{\mu^2}\frac{P_1(t,\mu)}{\Delta(t,\mu)}\left(\frac{d}{d\mu}\right)x+\frac{1}{\mu^3}\frac{P_0(t,\mu)}{\Delta(t,\mu)}x
\end{eqnarray*}
where $\Delta(t,\mu)$, $P_2(t,\mu)$, $P_1(t,\mu)$, and $P_0(t,\mu)$ are polynomials of degree $9$ in $\mu$ and $7$ in $t$. If we fix $t=0$ we obtain the equation
{\scriptsize
\begin{eqnarray*}
0 & = & \left(\frac{d}{d\mu}\right)^3x+\frac{2}{3}\frac{185895\mu^5-3763773\mu^4+5350808\mu^3+4783200\mu^2+41088\mu-19712}{(27\mu+4)(\mu-4)(1377\mu^3-19814\mu^2-8720\mu+352)\mu}\left(\frac{d}{d\mu}\right)^2x\\
 & & \quad +\frac{1}{189}\frac{1}{(27\mu+4)^2(\mu-4)^2(1377\mu^3-19814\mu^2-8720\mu+352)\mu^2}(250958250\mu^7-7572554568\mu^6\\
 & &\qquad +22739097249\mu^5+11647351846\mu^4-19225904752\mu^3-350929536\mu^2+350506496\mu\\
 & &\ \qquad +7135744)\left(\frac{d}{d\mu}\right)x\\
 & & \quad-\frac{17}{250047}\frac{1}{(27\mu+4)^2(\mu-4)^2(1377\mu^3-19814\mu^2-8720\mu+352)\mu^3}(25981560\mu^7-4643083377\mu^6\\
& & \qquad +5059137096\mu^5+27082814356\mu^4+12246977888\mu^3+4486312704\mu^2+569635328\mu-3863552)x
\end{eqnarray*}}that has singularities at the values $\mu=-4/27,0,4$, and at infinity. The zeroes of $1377\mu^3-19814\mu^2-8720\mu+352$ are apparent singularities.

\bibliographystyle{plain}
\bibliography{DGT}

\end{document}